\newtheorem{theorem}{Theorem}
\newtheorem{lemma}{Lemma}[section]
\newtheorem{corollary}{Corollary}[section]
\newtheorem{proposition}{Proposition}[section]
\theoremstyle{definition}
\newtheorem{definition}{Definition}
\theoremstyle{remark}
\newtheorem{remark}{Remark}
\theoremstyle{example}
\definecolor{myred}{RGB}{251,154,133}
\definecolor{myblue}{RGB}{153,206,227}
\definecolor{mylightblue}{RGB}{0, 150, 255}
\definecolor{mygreen}{RGB}{32, 210, 64}
\definecolor{mygray}{RGB}{220, 220, 220}
\tikzset{snake it/.style={decorate, decoration=snake}}
\DeclareFontFamily{OML}{rsfs}{\skewchar\font'177}
\DeclareFontShape{OML}{rsfs}{m}{n}{ <5> <6> rsfs5 <7> <8> <9>
rsfs7 <10> <10.95> <12> <14.4> <17.28> <20.74> <24.88> rsfs10 }{}
\DeclareMathAlphabet{\mathfs}{OML}{rsfs}{m}{n}
\newcommand{\BR}{{\mathbb{R}}}
\newcommand{\BZ}{{\mathbb{Z}}}
\newcommand{\CC}{{\mathcal{C}}}
\newcommand{\CF}{{\mathcal{F}}}
\newcommand{\CI}{{\mathcal{I}}}
\newcommand{\ind}{{\mathbbm{1}}}
\newcommand{\bae}{\begin{equation}\begin{aligned}}
\newcommand{\eae}{\end{aligned}\end{equation}}
\newcommand{\ep}{{\epsilon}}
\DeclareFontFamily{OML}{rsfs}{\skewchar\font'177}
\DeclareFontShape{OML}{rsfs}{m}{n}{ <5> <6> rsfs5 <7> <8> <9>
	rsfs7 <10> <10.95> <12> <14.4> <17.28> <20.74> <24.88> rsfs10 }{}
\DeclareMathAlphabet{\mathfs}{OML}{rsfs}{m}{n}
\newcommand{\fw}{\text{W}^{[0, \infty)}}
\newcommand{\zd}{\mathbb{Z}^d}
\newcommand{\vt}{v^{(T)}}
\begin{document}
\title{Percolation for the Finitary Random interlacements}

\author{Eviatar B. Procaccia}
\address[Eviatar B. Procaccia]{Texas A\&M University \& Technion - Israel Institute of Technology}
\urladdr{www.math.tamu.edu/~procaccia}
\email{eviatarp@gmail.com}

\author{Jiayan Ye}
\address[Jiayan Ye]{Texas A\&M University}
\urladdr{www.math.tamu.edu/~tomye}
\email{tomye1992@gmail.com}

\author{Yuan Zhang}
\address[Yuan Zhang]{Peking University}
\email{zhangyuan@math.pku.edu.cn}

\maketitle

%\tableofcontents
\begin{abstract}
	In this paper, we prove a phase transition in the connectivity of finitary random interlacements $\mathcal{FI}^{u,T}$ in $\BZ^d$, with respect to the average stopping time $T$. For each $u>0$, with probability one $\mathcal{FI}^{u,T}$ has no infinite connected component for all sufficiently small $T>0$, and a unique infinite connected component for all sufficiently large $T<\infty$. This answers a question of Bowen \cite{bowen2017finitary} in the special case of $\BZ^d$.
\end{abstract}

\section{Introduction}
\label{section_intro}
The model of random interlacements (RI) was introduced by Sznitman in \cite{Sznitman2009Vacant}, and finitary random interlacements (FRI) was recently introduced by Bowen \cite{bowen2017finitary} to solve the Gaboriau-Lyons problem in the case of arbitrary Bernoulli shifts over a non-amenable group. The Gaboriau-Lyons problem \cite{gaboriau2009measurable} asks whether every non-amenable measured equivalence relation contains a non-amenable treeable subequivalence relation. Bowen \cite{bowen2017finitary} gave a positive answer for the special case by studying FRI. Informally speaking, FRI can be described as a cloud of geometrically killed random walks on $\zd$. Similar to the convention that the range of random interlacements (RI) at level $u > 0$ is denoted by $\mathcal{I}^u$, the range of FRI is denoted by $\mathcal{FI}^{u,T}$, where $u >0$ is the multiplicative parameter controlling the number of geometrically killed random walks, and the parameter $T >0$ is the expected length of a geometrically killed random walk.

In this paper, we are interested in the FRI in the lattice $\zd$, with $d \geq 3$.  In \cite{bowen2017finitary} Bowen showed that FRI measure converges to RI measure in the weak* topology as $T$ goes to infinity. Thus it is natural to compare the geometry, especially the connectivity properties of the two systems. For any two vertices $x,y \in \mathcal{FI}^{u,T}$, $x$ and $y$ are said to be connected if there exist vertices $x_0, x_1, \cdots, x_n \in \mathcal{FI}^{u,T}$ such that $x = x_0$, $y =x_n$, and $(x_i, x_{i+1})$ are edges in the graph $\mathcal{FI}^{u,T}$ for all $0 \leq i < n$.  

In \cite{Sznitman2009Vacant} Sznitman proved that $\mathcal{I}^u$ is almost surely connected. In \cite{Procaccia2011Geometry} and \cite{R2010Connectivity}, it is shown that for any two vertices $x, y \in \mathcal{I}^u$, there is a path between $x$ and $y$ via at most $\lceil d/2 \rceil$ random walk trajectories, and this bound is sharp. This does not hold for FRI since for each site $x\in \BZ^d$ there is always a positive probability that $x$ is an isolated point in $\mathcal{FI}^{u,T}$. 

In \cite{bowen2017finitary}, Bowen proved the existence of infinite connected components within $\mathcal{FI}^{u,T}$ for large $T$ in all non-amenable groups. He raised the question that, whether there are infinite connected component(s) within $\mathcal{FI}^{u,T}$ for each $u>0$ and sufficiently large $T$ in any amenable group. See Question 2, \cite{bowen2017finitary} for details. In this paper, we give a partial affirmative answer to this question by showing there exists a phase transition for the FRI in $\BZ^d$. For any $u >0$, there are $0 < T_0(u,d) \leq T_1 (u,d) < \infty$. If $T > T_1$, $\mathcal{FI}^{u,T}$ has a unique infinite cluster almost surely. If $0 < T < T_0$, $\mathcal{FI}^{u,T}$ has no infinite cluster almost surely. To be precise, we have 

\begin{theorem}[Supercritical Phase]
	\label{main1}
	For all $u >0$, there is a $0 < T_1 (u,d) < \infty$ such that for all $T > T_1$, $\mathcal{FI}^{u,T}$ has an unique infinite cluster almost surely.
\end{theorem}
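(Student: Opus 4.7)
The plan has three ingredients: (i) a local comparison between $\mathcal{FI}^{u,T}$ and ordinary random interlacements $\mathcal{I}^u$ in the regime where $T$ is much larger than the chosen mesoscopic scale, (ii) a static renormalization on large boxes to upgrade local connectivity into an infinite cluster, and (iii) an insertion-tolerance argument in the spirit of Burton-Keane for uniqueness.

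\textbf{Existence via renormalization.} Fix $u>0$, a scale $L$ to be chosen later, and some $\epsilon>0$. Set $B_L:=[-L,L]^d$ and $B_x^L:=x+B_L$ for $x\in (2L+1)\BZ^d$. Declare $B_x^L$ \emph{good} if (a) $\mathcal{FI}^{u,T}\cap B_x^L$ contains a connected cluster touching every face of $B_x^L$, and (b) no FRI trajectory of length exceeding $L^{2+\epsilon}$ intersects the fattened box $x+B_{3L}$. Condition (b) makes the good-box indicator a function of trajectories of length at most $L^{2+\epsilon}$ visiting a box of diameter $O(L)$, ensuring finite-range dependence of the indicator process. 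Two adjacent good boxes share their crossing clusters by (a), so percolation of good sites in the renormalized lattice $(2L+1)\BZ^d$ forces an infinite cluster of $\mathcal{FI}^{u,T}$. I would then prove that for every $L$ and $\eta>0$ there exists $T_\star(u,L,\eta)$ such that $\prob[B_x^L\text{ is good}]\geq 1-\eta$ whenever $T\geq T_\star$. The key input is a total-variation coupling between FRI and RI trajectories hitting $x+B_{3L}$, valid on the event that each such trajectory is long enough to forget the geometric killing on the diffusion scale $\sqrt{T}\gg L$, combined with the known crossing property of $\mathcal{I}^u$ inside $B_L$. Once the good-box probability exceeds the threshold of Liggett-Schonmann-Stacey domination, the $k$-dependent good-box process stochastically dominates a supercritical Bernoulli site percolation on $\BZ^d$, giving an infinite cluster of $\mathcal{FI}^{u,T}$ almost surely.

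\textbf{Uniqueness.} The FRI law on $\{0,1\}^{\BZ^d}$ is the image of a translation-invariant Poisson point process on the space of finite trajectories, and is therefore ergodic under lattice translations. It enjoys insertion tolerance: conditional on the FRI outside any finite set $F$, the conditional law assigns positive density to any prescribed configuration on $F$, since inserting a short collection of trajectories visiting the desired points contributes a positive Radon-Nikodym factor in the Poisson construction. Burton-Keane's classical argument then rules out the coexistence of two or more infinite clusters.

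\textbf{Main obstacle.} The crux is the quantitative local coupling: Bowen's weak-$*$ convergence of FRI to RI is not enough, since connectivity events inside $B_L$ depend on the joint configuration of many trajectories simultaneously. I would need to show that the trajectory intensity measure of FRI restricted to paths hitting $x+B_{3L}$ and having length at least $L^{2+\epsilon}$ converges in total variation to the corresponding RI trajectory measure, with a quantitative rate in $T$. Controlling the Radon-Nikodym density between these two trajectory measures -- in particular the effect of the starting-point distribution and of the geometric killing on the hitting and return structure to $B_L$ -- is the main technical calculation, and it is what will determine the required size of $T_1(u,d)$.
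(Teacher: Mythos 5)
There is a genuine gap in the existence half, and it is structural rather than cosmetic: your good-box condition (b) is incompatible with the coupling regime you rely on for condition (a). You invoke a total-variation comparison with $\mathcal{I}^u$ that is supposed to hold when $\sqrt{T}\gg L$ with $L$ fixed; but in that regime a typical FRI trajectory has length of order $T\gg L^{2+\epsilon}$, and the expected number of walks of length exceeding $L^{2+\epsilon}$ that hit $x+B_{3L}$ is of order $u\,T^{d/2-1}\to\infty$, so $P[\text{(b)}]\to 0$ as $T\to\infty$. The very trajectories that are meant to make $\mathcal{FI}^{u,T}$ look like $\mathcal{I}^u$ inside $B_L$ are exactly the ones (b) forbids, so "good" boxes become rare, not typical. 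If you instead weaken (b) to a cutoff at the diffusive scale (say length $\le T\log^2 T$, influence radius $\approx\sqrt{T}\log T$), the dependence range of the renormalized field grows with $T$ while the block size stays fixed, and Liggett--Schonmann--Stacey no longer gives domination at a $T$-independent threshold; you would then need a quantitative (in $T$) bound on the bad-box probability, which is precisely the coupling estimate you defer to your "main obstacle" paragraph and never supply. The paper resolves this tension by the opposite choice of scales: it takes $T=R^3$ and blocks of side order $R^2\gg\sqrt{T}$, so that locality (condition (3) of Definition \ref{good def} and Lemma \ref{no outside}) holds essentially for free, and it never compares with full RI crossing events. Instead, inside each sub-box it grows a cluster of capacity at least $R^{2(d-2)/3}$ by iterating over $d-2$ independent FRI layers, and it connects neighbouring clusters using FRI trajectories truncated at $R^{2.5}\ll T$ steps, which stochastically dominate RI trajectories at level $uq$ truncated at the same time (Lemma \ref{ri coupling}, together with Lemma \ref{connect before escape}); the Rath--Sapozhnikov capacity and two-set connection lemmas then apply to these truncated pieces. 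Those truncated-trajectory estimates are the substitute for the quantitative coupling your sketch is missing, and without them (or an equivalent) the existence argument does not close.

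On uniqueness, your insertion-tolerance claim is stated too strongly: $\mathcal{FI}^{u,T}$ is a functional of a trajectory soup, and conditioning on "the FRI outside a finite set $F$" does not decouple from $F$, since walks started inside $F$ open edges outside it and walks started outside pass through $F$; in particular the conditional law given the occupied configuration on $F^c$ does not obviously assign positive probability to an arbitrary prescribed configuration on $F$. The paper's Burton--Keane adaptation avoids this by conditioning on the (countably many) trajectory configurations generated by walks started inside versus outside a box and then resampling the Poisson counts and walks started in $B(2n)$: with positive probability the resample places prescribed walks at the origin and none elsewhere in $B(n)$, which is how both the merging step (ruling out $2\le N<\infty$) and the trifurcation construction (ruling out $N=\infty$) are carried out. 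Your sketch can likely be repaired along these lines, but as written the key step is asserted rather than proved.
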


\begin{theorem}[Subcritical Phase]
	\label{main2}
	For all $u >0$, there is a $0< T_0 (u,d) < \infty$ such that for all $0 < T < T_0$, $\mathcal{FI}^{u,T}$ has no infinite cluster almost surely. 
\end{theorem}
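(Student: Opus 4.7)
The plan is to bound the cluster $\CC(0)$ of $0$ in $\mathcal{FI}^{u,T}$ by a subcritical branching process indexed by the FRI trajectories. Recall that $\mathcal{FI}^{u,T}$ is realised as a Poisson point process of geometrically killed simple random walks with intensity $u/(T+1)$ per starting vertex and killing probability $1/(T+1)$ at each step, so the expected number of steps in a single trajectory is $T$, and the edges of the graph $\mathcal{FI}^{u,T}$ are those traversed by some trajectory. Using $\sum_{x\in\BZ^d}G_T(x,y)=T+1$ for the killed Green's function $G_T(x,y)=\sum_{n\ge 0}(1-1/(T+1))^n p_n(x,y)$, a direct computation gives
\[
\BE\bigl[\#\{\gamma\in\mathcal{FI}^{u,T}:\gamma\ni y\}\bigr]\;\le\;u
\quad\text{and}\quad
\BE\bigl[\#\{\gamma\ni y:\gamma\text{ non-trivial}\}\bigr]\;\le\;2\,u\,T,
\]
where a trajectory is \emph{non-trivial} if it takes at least one step; the second bound follows by subtracting the contribution $u/(T+1)^2$ of length-$0$ trajectories starting at $y$.

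I would explore $\CC(0)$ by breadth-first search in the trajectory graph, tracking only non-trivial trajectories since these are the ones contributing edges. Let $\Gamma_0$ denote the set of non-trivial FRI trajectories containing $0$, and for $k\ge 1$ let $\Gamma_k$ be the set of non-trivial trajectories not in $\Gamma_0\cup\cdots\cup\Gamma_{k-1}$ that share a vertex with some $\gamma\in\Gamma_{k-1}$; then $\CC(0)\subseteq\{0\}\cup\bigcup_k\bigcup_{\gamma\in\Gamma_k}\gamma$, so it suffices to prove $\sum_k|\Gamma_k|<\infty$ almost surely. By Slivnyak's theorem, conditioning on a given trajectory $\gamma$ being in the underlying PPP leaves the remaining trajectories distributed as an independent copy of $\mathcal{FI}^{u,T}$, so $(|\Gamma_k|)_{k\ge 0}$ is stochastically dominated by a Galton--Watson process whose offspring law equals the number of non-trivial FRI trajectories meeting a Palm-distributed non-trivial trajectory $\gamma$. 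Since such $\gamma$ has at most $T+2$ distinct vertices in expectation, combining with the single-vertex bound yields
\[
\BE[\text{offspring}]\;\le\;(T+2)\cdot 2\,u\,T\;\le\;C_d\,u\,T\quad\text{for }T\le 1.
\]
Choosing $T_0(u,d):=\min\{1,\,1/(2C_d u)\}$ makes the dominating process subcritical for all $T<T_0$, hence extinct almost surely, so $\CC(0)$ is finite a.s.; a union bound over $\BZ^d$ combined with translation invariance of $\mathcal{FI}^{u,T}$ then precludes the existence of any infinite cluster.

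The main obstacle is the rigorous Slivnyak-based domination: successive exploration generations are not genuinely independent, since a newly discovered trajectory can intersect trajectories from earlier generations and FRI trajectory intersections are positively correlated. The standard remedy is a coupling in which, at every exploration step, a fresh independent copy of $\mathcal{FI}^{u,T}$ is attached and the discovered offspring is taken to be a (possibly strict) subset of the offspring of a genuine Galton--Watson process. Secondary technicalities are the Palm description of a typical non-trivial trajectory (essentially a killed simple random walk conditioned on taking at least one step) and moment estimates on its number of distinct vertices, both of which are elementary for the geometric distribution.
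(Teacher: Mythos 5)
Your overall strategy (explore the cluster through the trajectory-intersection graph and dominate it by a subcritical branching process) is genuinely different from the paper's argument, which is a Peierls count over self-avoiding lattice paths: there one bounds the Poisson number $N_\gamma$ of trajectories hitting a path $\gamma$ of length $n$ via $\mathrm{cap}^{(T)}(\gamma)\le 2d(n+1)$ and then uses a Chernoff bound on the total length of those killed walks to beat the $(2d)^n$ entropy. Your per-vertex first-moment bounds are essentially right (up to the $2d$ factor in the intensity, which you drop but which only changes constants): the expected number of non-trivial trajectories through a fixed vertex is indeed $O(uT)$.

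The genuine gap is in the Galton--Watson domination itself. You take the offspring law to be ``the number of non-trivial trajectories meeting a \emph{Palm-distributed} non-trivial trajectory,'' with mean at most $(T+2)\cdot 2uT$. But the trajectories whose offspring you must count in generations $k\ge 1$ are not Palm-typical: each was discovered \emph{because} it intersects its parent, so its law is the hitting-biased (size-biased) one, under which the expected number of vertices is larger than $T+2$; moreover the offspring count of a trajectory is not bounded by any quantity independent of the trajectory (a long trajectory has many children), so a single-type GW process with the stated mean does not stochastically dominate the exploration. Your proposed remedy (attaching a fresh independent copy of $\mathcal{FI}^{u,T}$ at each step) only addresses the independence/overcounting issue, not this size-biasing. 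The argument is repairable: either run a vertex-weighted (or multitype) branching comparison, or compute directly, via the Mecke equation, the expected number of vertices in generation $k$, whose per-generation factor is
\[
m:=\sup_{y\in\BZ^d}\ \BE\Big[\sum_{\gamma\ \mathrm{non\text{-}trivial},\ \gamma\ni y}\#V(\gamma)\Big],
\]
and a computation splitting $\mathrm{len}(\gamma)$ at $H_y$ (using $\sum_x G_T(x,y)=T+1$, $\sum_x E_x^{(T)}[H_y\ind_{H_y<\infty}]\le T(T+1)$, and the memoryless killing after $H_y$) gives $m\le C_d\,u\,T<1$ for $T$ small, after which geometric decay of generation sizes and a union bound finish the proof. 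As written, however, the key domination step is not justified, so the proof has a hole exactly where you flagged ``the main obstacle,'' though for a different reason than the one you identified.
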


\begin{remark}
In this paper, we consider percolation of $\mathcal{FI}^{u,T}$ as percolation for the edges crossed by trajectories in the FRI process. The notion of connectivity is defined in the second paragraph of this section. In literature, one usually considers percolation of interlacements as percolation for the vertices (sites) hit by the random interlacements process. The proof of Theorem \ref{main2} relies on the kind of percolation we choose, whereas the proof of Theorem \ref{main1} holds for both bond and site percolation. 
\end{remark}

The proof of Theorem \ref{main1} relies on a renormalization/block construction argument along with coupling the FRI to RI. We define a good block event in Section \ref{sec good def}, and we prove that this good event occurs with high probability in Section \ref{sec good with high probability}. In Section \ref{sec renormal} we apply a standard renormalization/block construction argument to see the spread of our ``good blocks" dominates a supercritical percolation. The proof of uniqueness is presented in Section \ref{sec unique}.  The proof of Theorem \ref{main2} is presented in Section \ref{sec subcritical}.

After the paper was posted on arXiv, we learned about works \cite{ MR3687239, MR3519252} considering a relevant continuum percolation model. In this model, a Poisson cloud of Brownian motion paths ($d=2,3$), or Wiener sausages with radius $r$ ($d\ge 4$), both truncated at some finite time $t$, are sampled on $\BR^d$ according to intensity measure $\lambda Leb(\BZ^d)$, for some fixed $\lambda>0$. \cite{MR3687239, MR3519252} proved the existence of a percolation phase transition with respect to $t$, and the asymptotic behavior of the critical value in $d\ge 4$ as $r\to0$. 

The results we prove here for finitary interlacements may, at least superficially, well resemble some discrete version of their problem. However, as pointed out in Question (3) and (4), \cite{MR3687239}: the rigorous relations between their model and random interlacements or ``the system of independent finite-time random walks, which are initially homogeneously distributed on $\zd$" remain open problems. It was conjectured in \cite{MR3687239, MR3519252} that the continuum model will bear more similarities to a continuous version of random interlacements \cite{MR3167123} when $\lambda\to 0$, $t\to \infty$, while $\lambda t$ remains a constant. Heuristically, this also agrees with the setting in finitary interlacements, see Definition \ref{def_FRI} and \ref{informal} for details.

\subsection{Open problems}

The phase transition for FRI is not entirely understood. We expect that there is a critical value $0 < T_c (u,d) < \infty$ such that $\mathcal{FI}^{u,T}$ has an infinite cluster for all $T > T_c$ and no infinite cluster for all $T < T_c$. Equivalently, $T_1 (u,d) = T_0 (u,d)$ in Theorems \ref{main1} and \ref{main2}. We are unable to prove a sharp phase transition in this paper. It is unclear that whether $\mathcal{FI}^{u,T}$ is monotonic with respect to $T$. By Definition \ref{informal}, as $T$ increases, the average number of geometrically killed random walks that each vertex generated decreases, but the average length of each geometrically killed random walks increases. Therefore, unlike other percolation models, one cannot prove a sharp phase transition for FRI using monotonicity.

Given Theorem \ref{main1} it is natural to ask about the internal graph distance in the unique infinite cluster. In the case of random interlacements it was proved in \cite{vcerny2012internal,drewitz2014chemical,procaccia2014range} that the internal graph distance in RI is proportional to the $\BZ^d$ distance with high probability. It would be interesting to show a similar result for the internal graph distance in the unique infinite cluster of $\mathcal{FI}^{u,T}$, for large enough $T>0$. Moreover if we denote by $d_{\mathcal{FI}^{u,T}}(\cdot,\cdot)$ and $d_{\mathcal{I}^{u}}(\cdot,\cdot)$ the internal graph distances in FRI and RI, one can conjecture that for every $u>0$, 
$$
\lim_{T\rightarrow\infty}\lim_{\|x\|_1\rightarrow\infty}d_{\mathcal{FI}^{u,T}}([0],[x])/\|x\|_1=\lim_{\|x\|_1\rightarrow\infty}d_{\mathcal{I}^{u}}([0],[x])/\|x\|_1
,$$
where $[x]$ denotes the closest vertex in the appropriate infinite component to $x\in\BZ^d$. A relative question is the continuity of the function $$u\to\lim_{\|x\|_1\rightarrow\infty}d_{\mathcal{FI}^{u,T}}([0],[x])/\|x\|_1$$ at all $u>0$ for any large enough $T>0$ (proved for the internal distance in Bernoulli percolation in \cite{garet2017continuity}).

Another natural question is to prove that the infinite component in $\mathcal{FI}^{u,T}$ has good isoperimetric bounds (of the type proved in \cite{procaccia2016quenched} for RI). 

\section{Preliminaries on Finitary Random Interlacements}
\label{sec fri def}

In this section, we collect some preliminary results on finitary random interlacements. Most of these results first appear in \cite{bowen2017finitary}. We begin with recalling the formal definition of FRI in \cite{bowen2017finitary}. Consider the lattice $\zd$, for $d \geq 3$. A finite walk on $\zd$ is a nearest-neighbor path $w: \{0,1,\cdots, N\} \rightarrow \zd$, for some $N \in \mathbb{Z}_{+}\cup\{0\}$. $N$ is called the length of the finite walk $w$. Let $\fw$ be the set of trajectories of all finite walks. And note that $\fw$ is a countable set. \par
Throughout this paper, we will use $P$ for probability and $E$ for the corresponding expectation. For $x \in \zd$ and $n \in \mathbb{N}$, let $P_x^n$ be the law of the simple random walk started at $x$ and killed at time $n$. Define
$$
P_x^{(T)} = \bigg(\frac{1}{T+1} \bigg) \sum_{n = 0}^{\infty} \bigg(\frac{T}{T+1} \bigg)^n P_x^n. 
$$
I.e. $P_x^{(T)}$ is the law of a geometrically killed simple random walk started at $x$ with $1/(T+1)$ killing rate. The expected length is $T$. We sometimes call geometrically killed random walk a killed random walk. \par 
For $0 < T < \infty$, let $v^{(T)}$ be the measure on $\fw$ defined by
$$
\vt = \sum_{x \in \zd} \frac{2d}{T+1} P_x^{(T)}. 
$$ 
Note that $\vt$ is a $\sigma$-finite measure. 

\begin{definition}
	\label{def_FRI}
	For $0 < u, T< \infty$, the finitary random interlacements (FRI) point process $\mu$ is a Poisson point process (PPP) on $\fw$ with intensity measure $u \vt$.
\end{definition}

Meanwhile, one may equivalently define $\mathcal{FI}^{u,T}$ constructively as follows:
\begin{definition}
	\label{informal}
	For each vertex $x \in \zd$, define an independent Poisson random variable $N_x$ with parameter $2du/(T+1)$. We start independent $N_x$ geometrically killed random walks from $x$, and each of them has expected length $T$. The FRI can be defined as the point measure on $\fw$ composed of all the geometrically killed random walk trajectories above from all vertices in $\zd$.
\end{definition}
It is easy to see the two definitions above are equivalent: 
\begin{proposition}
	The random point measure defined in Definition \ref{informal} is identically distributed as the Poisson point process defined in Definition \ref{def_FRI}. 
\end{proposition}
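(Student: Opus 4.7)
The plan is to verify the equivalence using two well-known facts about Poisson point processes: the representation of a PPP with finite intensity as a Poisson number of i.i.d.\ samples, and the superposition theorem. First I would decompose the measure $u\vt$ according to the starting vertex of the trajectory, observing that each $w\in\fw$ has a well-defined starting point $w(0)\in\zd$, and each summand $\frac{2du}{T+1}P_x^{(T)}$ in
$$
u\vt \;=\; \sum_{x\in\zd} \frac{2du}{T+1}\, P_x^{(T)}
$$
is a finite measure of total mass $2du/(T+1)$ supported on the set of trajectories starting at $x$; these supports are pairwise disjoint. This also reconfirms that $\vt$ is $\sigma$-finite.

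Next, I would invoke the elementary identity that for any $\lambda\in(0,\infty)$ and probability measure $Q$ on $\fw$, a Poisson point process with intensity $\lambda Q$ has the same law as $\sum_{i=1}^{N}\delta_{W_i}$, where $N\sim\text{Poisson}(\lambda)$ and $(W_i)_{i\ge 1}$ are i.i.d.\ with law $Q$ and independent of $N$. Applied with $\lambda=2du/(T+1)$ and $Q=P_x^{(T)}$, this is precisely the per-vertex rule of Definition \ref{informal}: $N_x$ independent geometrically killed walks started at $x$, each with law $P_x^{(T)}$. Hence the random measure of Definition \ref{informal} is realized as independent PPPs $\mu_x$, one for each $x\in\zd$, with $\mu_x$ having intensity $\frac{2du}{T+1}P_x^{(T)}$.

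Finally, the superposition theorem for independent Poisson point processes tells us that $\mu=\sum_{x\in\zd}\mu_x$ is a Poisson point process on $\fw$ with intensity $\sum_x \frac{2du}{T+1}P_x^{(T)} = u\vt$, which matches Definition \ref{def_FRI}. There is no substantive obstacle: the only point worth care is that the superposition is over a countably infinite index set, but this is harmless because the per-vertex intensity measures have pairwise disjoint supports, and each $\mu_x$ is almost surely a finite atomic measure (since $N_x$ is a.s.\ finite), so the sum is well-defined pointwise and retains the Poisson property on every measurable set meeting only finitely many of the supports.
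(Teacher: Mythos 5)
Your argument is correct and is essentially the paper's own proof written out in detail: the paper simply cites the standard construction of a Poisson point process with $\sigma$-finite intensity (as in \cite{drewitz2014introduction}), which is exactly the decomposition by starting vertex, the Poisson-number-of-i.i.d.-samples representation, and the superposition step that you carry out explicitly. No gaps; the care you take with the countable superposition and disjoint supports is precisely what makes the cited construction work.
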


\begin{proof}
	The equivalence follows directly from the standard construction of Poisson point process with a $\sigma-$finite intensity measure. See (4.2.1) of \cite{drewitz2014introduction} for example. 
\end{proof}

\begin{remark}
	 The construction in Definition \ref{informal} was informally described in Subsection 1.3.2, \cite{bowen2017finitary}.
\end{remark}
\begin{remark}
Without causing further confusion, we will use $\mathcal{FI}^{u,T}$ to denote both the Poisson point process on $W^{[0,\infty)}$ and the random subgraph of $\BZ^d$ it induces, which has the vertex set the set of vertices visited by $\mathcal{FI}^{u,T}$ and the edge set the set of edges crossed by trajectories in the process $\mathcal{FI}^{u,T}$.
\end{remark}

The rest of this section mainly concerns the distribution of paths within $\mathcal{FI}^{u,T}$ traversing a certain finite subset of $\zd$. Let $K \subset \zd$ be a finite subset. Let $W_K \subset \fw$ be the set of all finite walks that visit $K$ at least once. Define the stopping times
$$
H_K (w) = \inf \{ t \geq 0: w(t) \in K \},
$$ 
and
$$
\tilde{H}_K (w) = \inf \{ t \geq 1: w(t) \in K \}.
$$
For a finite path $w$, we say $H_K (w) = \infty$ if $w$ vanishes before it hits the set $K$. Similar for $\tilde{H}_K (w) = \infty$. Define
$$
W^{(2)} := \{ (a,b) \in \fw \times \fw : a(0) = b(0) \}.
$$
Let $K \subset L \subset \zd$ be finite subsets. For $x \in L \setminus K$, let $\xi_x^{(T)}$ be the measure on $W^{(2)}$ given by
$$
\xi_x^{(T)} (\{(a,b)\}) = 2d \cdot 1_{\tilde{H}_L (a) = \infty} P_x^{(T)} ( \{a\}) 1_{H_K (b) = \infty} P_x^{(T)} ( \{b\}).
$$
Define a measure $Q_{L,K}^{(T)}$ on $W^{(2)}$ by
$$
Q_{L,K}^{(T)} = \sum_{x \in L \setminus K} \xi_x^{(T)}.
$$
Define the concatenation map $\text{Con} : W^{(2)} \rightarrow \fw$ by
$$
\text{Con} (a,b) = \Big( a(len(a)), a(len(a)-1), \cdots , a(0), b(1), \cdots , b(len(b)) \Big).
$$

\begin{proposition} [Proposition $4.1$ in \cite{bowen2017finitary}]
	\label{local}
	For any $0 < u, T< \infty$, let $\mu$ be FRI with parameters $u, T$ and $K \subset L \subset \zd$ be finite subsets. Then $\ind_{W_L \setminus W_K} \mu$ is a PPP with intensity measure $u \cdot Con_{\ast} Q_{L,K}^{(T)} = \ind_{W_L \setminus W_K} uv^{(T)}$, where $Con_{\ast} Q_{L,K}^{(T)} = Q_{L,K}^{(T)} \circ Con^{-1}$ is the push-forward measure.
\end{proposition}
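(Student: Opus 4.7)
The first step is to use the standard restriction property of Poisson point processes: since $\mu$ is a PPP on $\fw$ with intensity $u\vt$ and $W_L\setminus W_K\subset\fw$ is measurable, the restricted process $\ind_{W_L\setminus W_K}\mu$ is automatically a PPP with intensity $\ind_{W_L\setminus W_K}u\vt$. Thus the substance of the proposition is the measure identity $u\cdot \text{Con}_*Q_{L,K}^{(T)}=\ind_{W_L\setminus W_K}u\vt$, and (since $\fw$ is countable) it suffices, after cancelling $u$, to check equality on singletons.

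The plan is to realize this identity through an explicit bijection between $W_L\setminus W_K$ and the support of $Q_{L,K}^{(T)}$ induced by $\text{Con}$, namely the ``decomposition at the first entrance to $L$''. Given $w\in W_L\setminus W_K$, set $m=H_L(w)$, so that $x:=w(m)\in L\setminus K$ (using that $w$ avoids $K$). Define $a(t)=w(m-t)$ for $0\le t\le m$ and $b(t)=w(m+t)$ for $0\le t\le \text{len}(w)-m$. Then $a(0)=b(0)=x$, the first-entrance property of $m$ yields $\tilde H_L(a)=\infty$, and the fact that $w$ avoids $K$ yields $H_K(b)=\infty$, so $(a,b)$ lies in the support of $\xi_x^{(T)}$ and $\text{Con}(a,b)=w$. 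Conversely, given such $(a,b)$ with $x\in L\setminus K$, the walk $\text{Con}(a,b)$ avoids $K$ entirely (the reversed $a$-piece stays outside $L\supset K$ after time $0$ by $\tilde H_L(a)=\infty$, $x\notin K$, and $b$ avoids $K$) and first hits $L$ precisely at its middle index $x$, which recovers $(a,b)$ uniquely.

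To finish, one would verify that the two measures agree on each matched singleton. Using $P_x^n(\{\eta\})=(2d)^{-n}$ for a nearest-neighbor path $\eta$ of length $n$ starting at $x$, one gets
$$P_x^{(T)}(\{\eta\})=\frac{1}{T+1}\Big(\frac{T}{T+1}\Big)^{\text{len}(\eta)}(2d)^{-\text{len}(\eta)}.$$
Writing $w=\text{Con}(a,b)$ with $\text{len}(a)+\text{len}(b)=\text{len}(w)$, substitution gives
$$Q_{L,K}^{(T)}(\{(a,b)\})=2d\,P_x^{(T)}(\{a\})P_x^{(T)}(\{b\})=\tfrac{2d}{T+1}P_{w(0)}^{(T)}(\{w\})=\vt(\{w\}),$$
so the geometric factors align and the factor $2d$ in $\xi_x^{(T)}$ combines with one of the killing factors $1/(T+1)$ to reproduce the prefactor $2d/(T+1)$ in $\vt$. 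The only nontrivial point is the bijection step: one must check both that the support constraints $\tilde H_L(a)=\infty$ and $H_K(b)=\infty$ exactly characterize elements of $W_L\setminus W_K$ after concatenation, and that the first-entrance decomposition inverts $\text{Con}$. Once that is secure, the measure identity is bookkeeping of the geometric weights.
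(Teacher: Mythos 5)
Your argument is correct: the restriction property of Poisson point processes reduces the claim to the measure identity, and your first-entrance decomposition at $H_L$ (reversed initial piece $a$ with $\tilde H_L(a)=\infty$, forward piece $b$ with $H_K(b)=\infty$) together with the geometric/$(2d)$-weight bookkeeping verifies $u\cdot \text{Con}_*Q^{(T)}_{L,K}=\ind_{W_L\setminus W_K}u\vt$ on singletons, which suffices since $\fw$ is countable. The paper itself gives no proof of this statement—it is quoted as Proposition 4.1 of Bowen's paper—and your proof is essentially the standard argument for that cited result, so there is nothing to add.
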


For a finite subset $A \subset \zd$ and $x \in \zd$, we denote the equilibrium measure of $A$ by
$$
e_A (x) := P_x (\tilde{H}_A = \infty) \cdot \ind_{x \in A}.
$$
Define the capacity of $A$ by
$$
\text{cap} (A) := \sum_{x \in \zd} e_A (x).
$$
One can define the random interlacements set $\mathcal{I}^u$, $u>0$ as a random vertex subset of $\zd$ such that for any finite subset $K \subset \zd$, we have $P (\mathcal{I}^u \cap K = \emptyset) = e^{-u \cdot \text{cap} (K)}$. The existence of such random subset is guaranteed in \cite{Sznitman2009Vacant}. By Dynkin's $\pi$-$\lambda$ lemma, there is a unique probability measure on $\{0,1\}^{\zd}$ that samples such random subsets. Random interlacements can also be defined as a Poisson point process of bi-infinite nearest-neighbor trajectories on $\zd$. Readers are referred to \cite{drewitz2014introduction} for a thorough introduction of random interlacements.

Consider the space $\{0,1\}^{\zd}$ with the canonical product $\sigma$-algebra. For $u >0$, let $P^u$ be the unique probability measure on $\{0,1\}^{\zd}$ such that for all finite subset $K \subset \zd$,
$$
P^u (\{w \in \{0,1\}^{\zd}: w(x) = 0, \text{for all } x \in K \}) = e^{-u \cdot \text{cap}(K)},
$$
i.e. $P^u$ is the probability law for random interlacements at level $u$.
For $0 < u, T < \infty$, let $P^{u,T}$ be the probability measure on $\{0,1\}^{\zd}$ such that for all finite subset $K \subset \zd$,
$$
P^{u,T} (\{w \in \{0,1\}^{\zd}: w(x) = 0, \text{for all } x \in K \}) = e^{-2d u \cdot \sum_{x \in K} P_x^{(T)} ( \tilde{H}_K = \infty)},
$$
i.e. $P^{u,T}$ is the law for FRI with parameters $u,T$. The following corollary connects FRI and random interlacements.

\begin{corollary}[Theorem A.$2$ of \cite{bowen2017finitary}]
	\label{local2}
	Let $u, T, \mu$ be as in Proposition \ref{local} and $K \subset \zd$ be a finite subset. Then 
	\begin{enumerate}
	\item $$uv^{(T)} (W_K) = 2d \sum_{x \in K} P_x^{(T)} ( \tilde{H}_K = \infty);$$
	\item $$	\lim_{T \rightarrow \infty} P \big( \mu(W_K) = 0 \big) = e^{-2d u \cdot \text{cap} (K)} = P \big( \mathcal{I}^{2d u} \cap K = \emptyset \big);$$
	\item $P^{u,T}$ converges to $P^{2d u}$ in the weak* topology as $T \rightarrow \infty$ in the space of probability measures on $\{0,1\}^{\zd}$.
	\end{enumerate}
\end{corollary}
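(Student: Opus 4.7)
\medskip
\noindent\emph{Proof proposal.} The three parts build on each other, and the substantive content lives in part (1); parts (2) and (3) then follow from standard limiting and weak* convergence arguments.

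For part (1), the plan is to unpack the definition $v^{(T)}(W_K) = \frac{2d}{T+1}\sum_{x\in\zd} P_x^{(T)}(W_K)$ and decompose each walk that visits $K$ according to its \emph{last} visit. For each $n\geq 0$ and $y\in K$, the event $\{L_K(w)=n,\, w(n)=y\}$ factors: the walk survives $n$ steps and is at $y$ at time $n$, contributing $p_n(x,y)(T/(T+1))^n$; and then by the memorylessness of the geometric killing time the post-$n$ portion is an independent $(T)$-killed walk from $y$ that does not re-enter $K$, i.e., has $\tilde H_K=\infty$. Summing over $x$ uses that the simple random walk kernel is doubly stochastic, $\sum_x p_n(x,y)=1$, and the geometric series $\sum_n (T/(T+1))^n = T+1$ cancels the $1/(T+1)$ prefactor. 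This gives $v^{(T)}(W_K) = 2d\sum_{y\in K} P_y^{(T)}(\tilde H_K=\infty)$, and multiplying by $u$ yields the asserted identity.

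For part (2), since $\mu$ is a Poisson point process with intensity $uv^{(T)}$, one has $P(\mu(W_K)=0) = \exp(-uv^{(T)}(W_K))$, so by part (1) it suffices to show $\sum_{y\in K} P_y^{(T)}(\tilde H_K=\infty) \to \text{cap}(K)$. Writing $P_y^{(T)}(\tilde H_K=\infty) = E\bigl[P_y(\tilde H_K > N_T)\bigr]$ with $N_T$ geometric of mean $T$, and observing that $N_T\to\infty$ in probability, bounded convergence gives $P_y^{(T)}(\tilde H_K=\infty) \to P_y(\tilde H_K=\infty) = e_K(y)$ for each $y\in K$; summing over the finite set $K$ and exponentiating produces both identities in (2).

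For part (3), recall that $\{0,1\}^{\zd}$ is compact and metrizable with a basis of cylinder sets $\{w : w|_F = \phi\}$, $F\subset\zd$ finite. Weak* convergence reduces to pointwise convergence of $P^{u,T}$ on every cylinder. Each cylinder event can be expanded by inclusion–exclusion into a signed combination of events of the form $\{w(x)=0\ \text{for all } x\in F'\}$ with $F'\subseteq F$, whose probabilities converge by part (2); assembling these yields $P^{u,T}\to P^{2du}$. I expect the main obstacle to lie in part (1): one needs to be precise about the joint law of the walk's length, its increments, and the stopping time $L_K$, so that the factorization into pre- and post-last-visit pieces is genuinely justified by memorylessness and no trajectory (in particular the length-$0$ walk when $x\in K$) is double-counted or missed.
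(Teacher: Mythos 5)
Your proposal is correct, and its only real divergence from the paper is that it is more self-contained. For part (1) the paper simply invokes Proposition \ref{local} (Bowen's Proposition 4.1): taking the inner set there to be empty, the restriction of $\mu$ to $W_K$ is a PPP whose intensity $u\cdot\text{Con}_{\ast}Q^{(T)}_{K,\emptyset}=\ind_{W_K}uv^{(T)}$ has total mass $2du\sum_{x\in K}P_x^{(T)}(\tilde{H}_K=\infty)$, which is exactly identity (1). Your last-visit decomposition (memorylessness of the geometric killing, $\sum_x p_n(x,y)=1$, and the geometric series cancelling the $1/(T+1)$ prefactor) reproves this total-mass identity directly; it is essentially the computation underlying Proposition \ref{local}, carried out with the forward portion avoiding $K$ rather than the backward portion, and your care about uniqueness of the last visit and the length-zero walk is exactly what makes it airtight. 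For part (2) you and the paper use the same ingredient, namely $\lim_{T\to\infty}P_x^{(T)}(\tilde{H}_K=\infty)=P_x(\tilde{H}_K=\infty)$ for each $x$ in the finite set $K$, combined with $P(\mu(W_K)=0)=\exp\big(-uv^{(T)}(W_K)\big)$ and the defining formula for $\mathcal{I}^{2du}$. For part (3) the paper just cites Theorem A.2 of \cite{bowen2017finitary}, whereas you supply the standard proof: inclusion--exclusion reduces cylinder probabilities to probabilities of events $\{w\equiv 0 \text{ on } K'\}$, which converge by part (2), and convergence on cylinders yields weak* convergence since cylinder indicators are continuous and span a dense subspace of $C(\{0,1\}^{\zd})$. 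The paper's route buys brevity and leverages machinery already stated; yours buys a proof readable without Proposition \ref{local} or the external reference, at the cost of redoing a computation the paper already has at hand.
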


\begin{proof}
	Parts $(1)$ and $(2)$ follow from Proposition \ref{local} and the fact that
	$$
	\lim_{T \rightarrow \infty} P_x^{(T)} ( \tilde{H}_K = \infty) = P_x ( \tilde{H}_K = \infty).
	$$
	Part $(3)$ also appears in Theorem A.$2$ of \cite{bowen2017finitary}.
\end{proof}

Let $K \subset \zd$ be a finite subset. Define the killed equilibrium measure by
$$
e_K^{(T)} (x) := (2d) P_x^{(T)} (\tilde{H}_K = \infty) \ind_{x \in K}.
$$
Define the killed capacity by
$$
\text{cap}^{(T)} (K) := \sum_{x \in \zd} e_K^{(T)} (x).
$$
Let
$$
\tilde{e}^{(T)}_K (x) := \frac{e_K^{(T)} (x)}{\text{cap}^{(T)} (K)}
$$
be the normalized equilibrium measure. Let $W^0_K := \{ w \in W_K: w(0) \in K\}$. Define a map
$$
s_K : W_K \ni w \mapsto w^0 \in W^0_K,
$$
where $w^0 = s_K(w)$ is the unique element of $W^0_K$ such that $w^0 (i) = w (H_K(w) + i)$ for all $i \geq 0$ and $len(w^0) = len(w) - H_K(w)$. I.e. we keep the part of the trajectory of $w$ after hitting $K$, and index the trajectory in a way such that the hitting of $K$ occurs at time $0$. If $m(\cdot)$ is a measure supported on $K$, then we define the measure
$$
P_m := \sum_{x \in K} m(x) P_x^{(T)}
$$
on $W_K$, for some $T >0$.

\begin{lemma}
	\label{finite measure 1}
	For $0 < u, T< \infty$, let $\mu$ be FRI with parameters $u, T$ and $K \subset \zd $ be a finite subset. Then $\mu_K = {s_{K}}_{\ast} \mu$ is a PPP on $W_K$ with intensity measure $u \cdot \text{cap}^{(T)} (K) P_{\tilde{e}^{(T)}_K}$. 
\end{lemma}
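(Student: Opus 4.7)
The plan is to apply the mapping theorem for Poisson point processes. Since $\mu$ is a PPP on $\fw$ with intensity $uv^{(T)}$, its restriction to $W_K$ is a PPP with intensity $uv^{(T)}\ind_{W_K}$, and pushing forward under the measurable map $s_K:W_K\to W_K^0$ yields a PPP on $W_K^0$ whose intensity is ${s_K}_\ast(uv^{(T)}\ind_{W_K})$. The entire proof therefore reduces to identifying this pushforward measure with $u\cdot\cp^{(T)}(K)\,P_{\tilde e_K^{(T)}}$.

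To compute the pushforward, fix $w^0\in W_K^0$ with $w^0(0)=y\in K$. Every $w\in W_K$ with $s_K(w)=w^0$ decomposes uniquely as a concatenation $w=p\cdot w^0$, where $p$ is a nearest-neighbor path of length $h=H_K(w)$ from some $x\in\zd$ to $y$ with $p(i)\notin K$ for $0\le i<h$. Using the explicit factorization $P_x^{(T)}(\{w\})=(2d)^{-\mathrm{len}(w)}(T/(T+1))^{\mathrm{len}(w)}/(T+1)$ valid for any nearest-neighbor walk, the concatenation $w=p\cdot w^0$ has probability given by the product of the two corresponding factors, yielding
\[
P_x^{(T)}\bigl(\{w\in W_K:\,s_K(w)=w^0\}\bigr)\;=\;P_y^{(T)}(\{w^0\})\sum_{h\ge 0}\Bigl(\tfrac{T}{T+1}\Bigr)^{\!h}\,P_x\bigl(H_K=h,\,w(h)=y\bigr),
\]
where $P_x$ denotes the simple random walk law.

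The key step is then to sum over $x\in\zd$ and invoke time reversibility of simple random walk: the map $p(\cdot)\mapsto p(h-\cdot)$ gives a weight-preserving bijection between length-$h$ paths from $x$ to $y$ first hitting $K$ at time $h$ and length-$h$ paths from $y$ to $x$ whose restriction to times $1,\dots,h$ avoids $K$. Consequently $\sum_{x\in\zd}P_x(H_K=h,w(h)=y)=P_y(\tilde H_K>h)$, and expanding the definition of the geometrically killed walk,
\[
\sum_{h\ge 0}\Bigl(\tfrac{T}{T+1}\Bigr)^{\!h}P_y(\tilde H_K>h)\;=\;(T+1)\,P_y^{(T)}(\tilde H_K=\infty).
\]
Combining this with the prefactor $2d/(T+1)$ from $v^{(T)}$ produces
\[
{s_K}_\ast\bigl(uv^{(T)}\ind_{W_K}\bigr)\;=\;u\sum_{y\in K}2d\,P_y^{(T)}(\tilde H_K=\infty)\,P_y^{(T)}\;=\;u\sum_{y\in K}e_K^{(T)}(y)\,P_y^{(T)},
\]
which upon normalizing is exactly $u\cdot\cp^{(T)}(K)\,P_{\tilde e_K^{(T)}}$, as desired.

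The main obstacle I expect is bookkeeping rather than conceptual: one must correctly match the forward condition ``first hit of $K$ at time $h$'' with its reversed form ``no return to $K$ at times $1,\dots,h$'' without off-by-one slips at the endpoints $h=0$ and $i=h$, and carefully track the $2d$ and $T+1$ prefactors appearing in $v^{(T)}$, $P_x^{(T)}$, and $e_K^{(T)}$. All remaining ingredients are routine Poisson-point-process manipulation together with the memoryless property of the geometric length.
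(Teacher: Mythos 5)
Your proof is correct, but it takes a genuinely different route from the paper. The paper disposes of this lemma in one line by combining Proposition \ref{local} (Bowen's Proposition 4.1, which already encodes the splitting of a trajectory at its first hit of the relevant set into a backward escaping piece and a forward piece) with a standard restriction/mapping/marginalization property of Poisson point processes (Exercise 4.6(c) of the cited lecture notes); the killed equilibrium measure then appears because integrating out the backward piece produces the factor $P_x^{(T)}(\tilde{H}_K=\infty)$. You instead work directly from the definition of $v^{(T)}$: you decompose each $w\in W_K$ as an initial segment avoiding $K$ followed by $s_K(w)$, and identify the pushforward intensity by the time-reversal identity $\sum_{x\in\zd}P_x\big(H_K=h,\,w(h)=y\big)=P_y\big(\tilde{H}_K>h\big)$ together with $\sum_{h\ge 0}\big(\tfrac{T}{T+1}\big)^h P_y(\tilde{H}_K>h)=(T+1)P_y^{(T)}(\tilde{H}_K=\infty)$, which is exactly the mechanism hidden inside Bowen's proposition. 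Your bookkeeping checks out (including the $h=0$ term and the prefactors $2d/(T+1)$), and the resulting intensity $u\sum_{y\in K}e_K^{(T)}(y)P_y^{(T)}=u\,\text{cap}^{(T)}(K)\,P_{\tilde{e}_K^{(T)}}$ is finite, so the mapping theorem applies without further fuss. What the paper's route buys is brevity and reuse of an already-proved structural result; what yours buys is a self-contained derivation that makes transparent where the killed equilibrium measure comes from, at the cost of essentially reproving the content of Proposition \ref{local} in this special case.
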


\begin{proof}
	The proof follows from the Proposition \ref{local} and properties of PPP (see Exercise $4.6$(c) in \cite{drewitz2014introduction}).
\end{proof}

As a consequence of Lemma \ref{finite measure 1}, we have
$$
K \cap \Bigg( \bigcup_{w \in \text{Supp} (\mu_K)} \text{range} (w) \Bigg) = K \cap \Bigg( \bigcup_{w \in \text{Supp} (\mu)} \text{range} (w) \Bigg),
$$
where $K, \mu, \mu_K$ are the same as in Lemma \ref{finite measure 1}.

\begin{lemma}
	\label{finite measure 2}
	Let $N_K$ be a Poisson random variable with parameter $u \cdot \text{cap}^{(T)} (K)$,  and $\{w_j\}_{j \geq 1}$ are i.i.d. killed random walks with distribution $P_{\tilde{e}^{(T)}_K}$ and independent from $N_K$. Then the point measure
	$$
	\tilde{\mu}_K = \sum_{j=1}^{N_K} \delta_{w_j}
	$$
	is a PPP on $W_K$ with intensity measure $u \cdot \text{cap}^{(T)} (K) P_{\tilde{e}^{(T)}_K}$. In particular, $\tilde{\mu}_K$ has the same distribution as $\mu_K$.
\end{lemma}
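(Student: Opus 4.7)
The plan is to recognize this lemma as the standard construction of a Poisson point process (PPP) with finite total intensity: whenever $\lambda\in(0,\infty)$ and $\nu$ is a probability measure on a nice measurable space, the random counting measure $\sum_{j=1}^N \delta_{X_j}$, with $N\sim\mathrm{Poisson}(\lambda)$ and i.i.d.\ $X_j\sim\nu$ independent of $N$, is a PPP with intensity $\lambda\nu$. Here I apply this with $\lambda=u\cdot\text{cap}^{(T)}(K)$ and $\nu=P_{\tilde{e}^{(T)}_K}$. The first sanity check is that $P_{\tilde{e}^{(T)}_K}$ is indeed a probability measure on $W_K$: each $P_x^{(T)}$ is a probability measure on $\fw$ supported on walks starting at $x$, so for $x\in K$ it is supported on $W_K$, and the weights $\tilde{e}^{(T)}_K(x)$ sum to $1$ by the definition of the normalized killed equilibrium measure.

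The core step is to verify the PPP property of $\tilde\mu_K$. For any finite measurable partition $W_K=A_1\sqcup\cdots\sqcup A_n$, conditional on $N_K=m$ the vector $\bigl(\tilde\mu_K(A_1),\dots,\tilde\mu_K(A_n)\bigr)$ is multinomial with parameters $\bigl(m;\nu(A_1),\dots,\nu(A_n)\bigr)$. Averaging $N_K$ against its $\mathrm{Poisson}(\lambda)$ law, the standard Poisson-multinomial splitting identity turns this into a vector of independent $\mathrm{Poisson}(\lambda\nu(A_i))$ random variables. This gives both the Poisson marginals and the independence on disjoint sets that together characterize a PPP; see for instance (4.2.1) of \cite{drewitz2014introduction}, which was already invoked in the proof of the equivalence of Definitions \ref{def_FRI} and \ref{informal}.

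The last sentence of the lemma is then immediate: Lemma \ref{finite measure 1} identifies $\mu_K$ as a PPP with the same intensity $\lambda\nu$, and a PPP is determined in distribution by its intensity measure (one can read off everything from void probabilities, and in any case $W_K$ is a countable set), so $\tilde\mu_K$ has the same law as $\mu_K$. There is no real obstacle here: the lemma is essentially a textbook restatement of the finite-mass PPP construction, and I expect the written proof to consist of little more than a pointer to that construction together with the probability-measure check for $P_{\tilde{e}^{(T)}_K}$ above.
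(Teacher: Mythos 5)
Your proof is correct and follows essentially the same route as the paper, whose proof is just a pointer to the standard finite-mass PPP construction (Section 4.2 of \cite{drewitz2014introduction}) together with the merging and thinning properties of the Poisson distribution. Your write-up simply fills in the details (the probability-measure check for $P_{\tilde{e}^{(T)}_K}$, the Poisson--multinomial splitting, and the identification with $\mu_K$ via Lemma \ref{finite measure 1}) that the paper's citation leaves implicit.
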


\begin{proof}
	The proof follows from the construction of PPP (see section $4.2$ in \cite{drewitz2014introduction}) and the merging and thinning property of Poisson distribution.
\end{proof}

\begin{remark}
	A similar result (Corollary 4.2) was proved in \cite{bowen2017finitary}. Here the previous two lemmas are stated in the form better suitable for the later use in this paper. 
\end{remark}

\begin{remark}
The capacity with truncation/killing measure was defined in a
continuous sense in \cite{sznitman2012topics}. It can also be discretized, which gives us the same  $\text{cap}^{(T)} (\cdot)$ as defined in this paper. Thus, similar to \cite{teixeira2009interlacement}, finitary random interlacements may also be equivalently interpreted as random interlacements on a weighted graph with killing measure. This explains why we have representation of finitary random interlacements on compact sets in Lemmas \ref{finite measure 1} and \ref{finite measure 2}.
\end{remark}

\section{Definition of Good Boxes}

\label{sec good def}

Recall the general outline of renormalization argument described in Section \ref{section_intro}. In this section we define the "good" block event in which there is a locally generated large connected cluster in the corresponding ``box". The viability of such event will be proved in the Section \ref{sec good with high probability}. Parts of the definition below are inspired by \cite{rath2011transience}. This also enables us to apply their estimates for regular interlacements in the next section.   

Without loss of generality, we will always assume here the FRI's are constructed according to Definition \ref{informal}. For any $u,T>0$, the FRI $\mathcal{FI}^{u,T}$ is identically distributed as the union of two independent copies of FRI with intensity level $u/2$ and average stopping time $T$, i.e.
$$
\mathcal{FI}^{u,T} = \mathcal{FI}^{u/2,T}_1 \cup \mathcal{FI}^{u/2,T}_2,
$$
where $\mathcal{FI}^{u/2,T}_i$ is the $i$-th copy. For $x \in \zd$ and $R \in \mathbb{Z}_{+}$, let $B(x,R) := x+ [-R,R]^d$ be a box of length $R$ centered at $x$. We write $B(R) = B(0,R)$. Let $\hat{B} (R) := [-64R^2, 64R^2]^d$ be a box in the lattice $\zd$. We define some subboxes in $\hat{B} (R)$. For $0 \leq i \leq 8R$ and $1 \leq j \leq d$, let
$$
x_{i,j} = (-32R^2 + 8Ri) \text{e}_j,
$$
where $\text{e}_j$ is the $j$-th unit vector in $\zd$. Let
$$
b_{i,j} (R) := x_{i,j} + [-R, R]^d \subset \hat{B} (R),
$$
and
$$
\hat{b}_{i,j} (R) := x_{i,j} + [-2R, 2R]^d \subset \hat{B} (R).
$$
For any subset $A \subset \zd$, we define the internal vertex boundary of $A$ by
$$
\partial^{in} A := \{x \in A: \exists y \in \zd \setminus A \text{ such that } |x - y|_1 = 1 \},
$$
and define the external vertex boundary by
$$
\partial^{out} A := \{x \in \zd \setminus A: \exists y \in  A \text{ such that } |x - y|_1 = 1 \}.
$$
Recall the construction of FRI in Definition \ref{informal}. Let $\mathcal{D}_{i}$ be the random subgraph in $\zd$ consisting of all trajectories of killed random walks starting in $B(0, 128R^2)$ in FRI $\mathcal{FI}^{u/2,T}_i$, for $i = 1,2$, and $\mathcal{D} = \mathcal{D}_{1} \cup \mathcal{D}_{2}$. For any subsets $A,B \subset \zd$ where $A$ is connected, let $\mathcal{C} (A, B)$ be the connected component of $A\cup B$ containing $A$. Define the random set
$$
\mathcal{C}_{i,j} (x) := \mathcal{C} \big( x, \hat{b}_{i,j} (R) \cap \mathcal{D}_{1} \big).
$$
For $1 \leq j \leq d$, we define the ``top" half of $\hat{B} (R)$ in the $j$-direction by
$$
\hat{B}^{+}_j (R) = \big\{ x \in \mathbb{R}^d: 0< x_j \leq 64R^2, \text{ and } -64R^2 \leq x_i \leq 64R^2, \text{ if } i \neq j  \big\}, 
$$
and define the ``bottom" half of $\hat{B} (R)$ in the $j$-direction by
$$
\hat{B}^{-}_j (R) = \big\{ x \in \mathbb{R}^d: -64R^2 \leq x_j < 0, \text{ and } -64R^2 \leq x_i \leq 64R^2, \text{ if } i \neq j  \big\}.
$$
Let
$$
A^{+}_j (R) = \big\{ x \in \mathbb{R}^d: 96R^2 \leq x_j \leq 128R^2, \text{ and } -128R^2 \leq x_i \leq 128R^2, \text{ if } i \neq j  \big\}, 
$$
and
$$
A^{-}_j (R) = \big\{ x \in \mathbb{R}^d: -128R^2 \leq x_j \leq -96R^2, \text{ and } -128R^2 \leq x_i \leq 128R^2, \text{ if } i \neq j  \big\}.
$$
\begin{definition}
\label{good def}
We say $\hat{B} (R)$ is \textit{good} if the following conditions hold:
\begin{enumerate}
	\item For all $0 \leq i \leq 8R$ and $1 \leq j \leq d$, let
	$$
	E_{i,j} := \Big\{ x \in b_{i,j} (R) \cap \mathcal{D}_{1} : \text{cap} \big( \mathcal{C}_{i,j} (x) \big) \geq R^{2(d-2)/3} \Big\}.
	$$
	We have $E_{i,j} \neq \emptyset$ for all $i,j$.
	\item For all $0 \leq i < 8R$ and $1 \leq j \leq d$, and for all $x \in E_{i,j}$, and $y \in E_{i+1,j}$,
	$$
	\mathcal{C}_{i+1,j} (y) \cap \mathcal{C}\left(\mathcal{C}_{i,j} (x), \mathcal{D}_{2} \right)\not=\emptyset.
	$$
	I.e., $\mathcal{C}_{i,j} (x)$ and $\mathcal{C}_{i+1,j} (y)$ are connected by $\mathcal{D}_{2}$. 
	\item For all $1 \leq j \leq d$, no geometrically killed random walks starting in $A^{+}_j (R)$ intersect with $\hat{B}^{-}_j (R)$, and no geometrically killed random walks starting in $A^{-}_j (R)$ intersects with $\hat{B}^{+}_j (R)$. 
\end{enumerate}
\end{definition}

\begin{remark}
	\label{independence}
All conditions in Definition \ref{good def} are restrictions on the trajectories of  the killed random walks starting in $B(0, 128R^2)$. This fact is crucial in the renormalization argument in Section \ref{sec renormal}.   
\end{remark}

Now we define the shift of the box $\hat{B} (R)$ in $\zd$. For $x \in \zd$, let
$$
\hat{B}_x (R) = 32R^2 x + \hat{B} (R).
$$
We say that $\hat{B}_x (R)$ is \textit{good} if $\hat{B}(R)$ is a good box in $\mathcal{FI}^{u,T}-32R^2 x$. 

\begin{remark}
	\label{neighbor connect}
Suppose $x$ and $y$ are two neighboring vertices in $\zd$, and both $\hat{B}_x (R)$ and $\hat{B}_y (R)$ are good, then by condition $(3)$ in Definition \ref{good def} the connectivity event in $\hat{B}_x (R) \cap \hat{B}_y (R)$ can be generated only by the random walk paths starting in $B(x, 128R^2) \cap B(y, 128R^2)$, so we have a large connected component crossing $\hat{B}_x (R)$ and $\hat{B}_y (R)$.      
\end{remark}

Now we define a family $\{Y_x : x \in \zd \}$ of $\{0,1\}$-valued random variables given by
\begin{equation}
\label{block coord}
Y_x=\begin{cases}
1, & \text{if $\hat{B}_x (R)$ is good};\\
0, & \text{otherwise}.
\end{cases}
\end{equation}
If there is an infinite open cluster in the lattice $\{Y_x \}_{x \in \zd}$, then by Remark \ref{neighbor connect} there is an infinite open cluster in the underlying original lattice. When $T=R^3$, we will show that $\hat{B} (R)$ is good with high probability for all sufficiently large $R$. Then we will use a renormalization argument to show that there is an infinite cluster in $\mathcal{FI}^{u,R^3}$ almost surely for large $R$. 

\begin{remark}
For simplicity, we will assume $R \in \mathbb{Z}_{+}$ for the rest of this paper. For $R \in \mathbb{R}_{+} \setminus \mathbb{Z}_{+}$, one can replace $R$ and $R^2$ by $\lfloor R \rfloor$ and $\lfloor R \rfloor^2$ respectively in the definition of good boxes, and all results will follow accordingly.
\end{remark}

Throughout the rest of this paper, we denote positive constants by $c, C, c_1,c', \cdots$, and their values can be different from place to place. All constants are dependent on the dimension $d$ by default.

\section{$\hat{B} (R)$ is good with High Probability}

\label{sec good with high probability}

In this section, we prove that $\hat{B} (R)$ is good with high probability. I.e., 

\begin{theorem}
	\label{good}
Consider the FRI $\mathcal{FI}^{u,R^3}$. For all $u >0$, we have
$$
\lim_{R \rightarrow \infty} P \big( Y_0 = 1 \big) = 1.
$$
\end{theorem}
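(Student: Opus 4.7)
The plan is to verify each of the three conditions in Definition~\ref{good def} separately and combine them via a union bound. The walks of $\mathcal{D}_1$ and $\mathcal{D}_2$ are independent FRIs at level $u/2$, and the walks starting in the outer shells $A_j^\pm(R)$ used for condition~(3) are independent of $\mathcal{D}_1 \cup \mathcal{D}_2$ (Remark~\ref{independence}), so each event can be controlled in isolation.

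For condition~(1), the strategy is to couple the trace of $\mathcal{D}_1$ on each subbox $\hat{b}_{i,j}(R)$ to a standard RI configuration at level slightly less than $u/2$. With $T = R^3$ vastly exceeding the squared diameter $R^2$ of $\hat{b}_{i,j}(R)$, a killed walk is very unlikely to die before completing a given visit to the subbox; combined with Lemma~\ref{finite measure 1} and the convergence $\text{cap}^{(T)}(K) \to \text{cap}(K)$ (Corollary~\ref{local2}), one obtains a coupling of $\mathcal{D}_1 \cap \hat{b}_{i,j}(R)$ with a standard RI at level $u/2 - o(1)$ on the same box that succeeds with probability $1 - o(1)$. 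One then invokes the known existence with high probability of a cluster of capacity at least $R^{2(d-2)/3}$ for RI restricted to a box of side $R$, using that the threshold is sublinear in the box capacity $\Theta(R^{d-2})$ (this is precisely the regime analyzed in \cite{rath2011transience}, whose estimates can be imported). A union bound over the $O(R)$ pairs $(i,j)$ preserves the probability $1 - o(1)$.

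For condition~(2), condition on $\mathcal{D}_1$ so that the clusters $\mathcal{C}_{i,j}(x), \mathcal{C}_{i+1,j}(y)$ and their capacities are fixed. Both clusters have capacity at least $R^{2(d-2)/3}$ and lie in subboxes separated by distance of order $R$. Apply Lemma~\ref{finite measure 1} to $\mathcal{D}_2$ with $K = \mathcal{C}_{i,j}(x) \cup \mathcal{C}_{i+1,j}(y)$: the walks meeting $K$ form a Poisson collection with mean count $\tfrac{u}{2}\text{cap}^{(T)}(K) \gtrsim R^{2(d-2)/3}$ (the killed/usual capacity comparison being valid since $T \gg \text{diam}(K)^2$). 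A walk started from the normalized killed equilibrium measure on $K$ has probability $\gtrsim R^{-(d-2)/3}$ of visiting both clusters before being killed: it starts on one cluster with constant probability, then reaches the other (at distance $\sim R$, with remaining expected lifetime $R^3$) with probability $\gtrsim R^{-(d-2)}\cdot R^{2(d-2)/3} = R^{-(d-2)/3}$ by Green's function estimates. Thus the expected number of connecting walks is $\gtrsim R^{(d-2)/3}$, and the failure probability is at most $\exp(-c R^{(d-2)/3}) = o(1)$; a polynomial union bound over $x \in E_{i,j}$, $y \in E_{i+1,j}$ and all adjacent subbox pairs preserves this.

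Condition~(3) is the most direct: a walk starting in $A_j^\pm(R)$ must travel Euclidean distance at least $96 R^2$ to enter the opposite half-box $\hat{B}_j^\mp(R)$. Splitting on whether the geometric length exceeds roughly $R^4$, standard Gaussian tail bounds for SRW and the geometric tail give that each individual walk reaches such a distance with probability at most $\exp(-cR)$ (since $(R^2)^2/R^3 = R$ is the relevant exponent in both regimes); as there are only polynomially many walks to union-bound over, this contribution is also $o(1)$. The principal obstacle lies in condition~(2), where one must convert the capacity lower bound $R^{2(d-2)/3}$ into effective connection estimates, requiring both the killed/usual capacity comparison for irregular sets $\mathcal{C}_{i,j}(x)$ and the Green's-function hitting estimate for killed walks aimed at low-capacity targets. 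The exponent $2(d-2)/3$ in Definition~\ref{good def} is tuned so that step~(1) produces such clusters with high probability while step~(2) still yields a stretched-exponential failure bound $\exp(-cR^{(d-2)/3})$ absorbing the polynomial union bounds.
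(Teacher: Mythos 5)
Your outline for conditions (2) and (3) is broadly in the right spirit, but the key step for condition (1) rests on a claim that is not justified and, as stated, is essentially false: you assert that since $T=R^3$ greatly exceeds the squared diameter of $\hat b_{i,j}(R)$, one can couple the trace of $\mathcal{D}_1$ on the subbox with a standard RI at level $u/2-o(1)$ so that the coupling succeeds with probability $1-o(1)$. The ``a killed walk rarely dies during a given visit'' heuristic controls a single excursion, but the trajectories hitting a box of radius $2R$ number of order $R^{d-2}$, and each one is killed before completing \emph{all} of its excursions to the box with probability that is only polynomially small in $R$ (the gaps between excursions have heavy tails, so this probability is of order $R^{-1}$ up to $R^{-(d-2)/2}$ depending on $d$). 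Hence the expected number of trajectories whose killed trace on the box differs from the corresponding RI trace diverges with $R$, and no near-total-variation coupling of the two traces follows from this argument; establishing such a local comparison between $\mathcal{FI}^{u,T}$ and RI is a genuinely nontrivial problem, not a one-line consequence of $\text{cap}^{(T)}\to\text{cap}$. Moreover, even granting some coupling with failure probability $o(1)$, your union bound over the $O(R)$ subboxes requires failure probability $o(1/R)$ per subbox, which you never address.

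The paper avoids coupling full traces altogether: it keeps only trajectories whose forward lifetime exceeds a fixed truncation ($R^{1.6}$ for condition $(1^*)$, $R^{2.5}$ for $(2^*)$), shows via Lemmas~\ref{FRI couple} and~\ref{ri coupling} that the \emph{entrance process} of these kept trajectories matches that of RI at a slightly thinned level, and then works only with the deterministic-length initial segments, which certainly lie in the FRI. This is exactly the format needed to import the truncated-trajectory estimates of R\'ath--Sapozhnikov (their Lemmas 6--8, used in an iterative capacity-boosting scheme over $d-2$ independent copies of $\mathcal{FI}^{u/(2d-4),R^3}$, and their Lemmas 11--12 together with the exit-time bound of Lemma~\ref{connect before escape} for condition $(2^*)$). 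Two further points in your sketch need repair: Lemma~\ref{finite measure 1} applies to the full process $\mathcal{FI}^{u/2,T}_2$, not to $\mathcal{D}_2$, so to obtain conditions (1) and (2) of Definition~\ref{good def} (which refer to walks started in $B(0,128R^2)$) you must add the localization step of Lemma~\ref{no outside}; and in condition (3) the correct rate for a killed walk of mean lifetime $R^3$ to travel distance of order $R^2$ is $\exp(-cR^{1/2})$ (optimizing the geometric and Gaussian tails), not $\exp(-cR)$ --- harmless for the union bound, but your exponent computation splitting at lifetime $R^4$ does not go through.
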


To show Theorem \ref{good}, we will consider the following weaker version of conditions $(1)$ and $(2)$ in Definition \ref{good def}:
\begin{enumerate}[(1*)]
	\item For all $0 \leq i \leq 8R$ and $1 \leq j \leq d$, let
	$$
	 \mathcal{\tilde C}_{i,j} (x) := \mathcal{C} \big( x, \hat{b}_{i,j} (R) \cap \mathcal{FI}^{u,T}_1  \big).
	$$
	and
	$$
	\tilde{E}_{i,j} := \Big\{ x \in b_{i,j} (R) \cap \mathcal{FI}^{u,T}_1 : \text{cap} \big( \mathcal{\tilde C}_{i,j} (x) \big) \geq R^{2(d-2)/3} \Big\}.
	$$
	We have $\tilde{E}_{i,j} \neq \emptyset$ for all $i,j$.
	\item For all $0 \leq i < 8R$ and $1 \leq j \leq d$, and for all $x \in \tilde{E}_{i,j}$, and $y \in \tilde{E}_{i+1,j}$,
	$$
	\tilde{\mathcal{C}}_{i+1,j} (y) \cap \mathcal{C}\left(\tilde{\mathcal{C}}_{i,j} (x), \mathcal{FI}^{u,T}_2 \right)\not=\emptyset.
	$$	
\end{enumerate}

We first prove that condition $(1^{*})$ and $(2^{*})$ occur with high probability. Then we show that no killed random walk starting in $\zd \setminus B(128R^2)$ will reach $\hat{B} (R)$ with high probability. Combining these we know condition $(1)$ and $(2)$ in Definition \ref{good def} occur with high probability. We will show condition $(3)$ occurs with high probability separately in Lemma \ref{cond 3}.

We will often use the following large deviation bound for Poisson distributions.

\begin{lemma}[Equation $2.11$ in \cite{rath2011transience}]
	\label{poi concent}
	If $X$ is a Poisson distribution with parameter $\lambda$, then 
	$$
	P \big( \lambda /2 \leq X \leq 2 \lambda) \geq 1 - 2e^{- \lambda /10}.
	$$
\end{lemma}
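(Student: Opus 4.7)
The plan is to apply the standard exponential Markov (Chernoff) inequality to each tail of $X$ separately, using the explicit Poisson moment generating function $E[e^{tX}] = \exp(\lambda(e^t - 1))$, and then combine the two tail bounds by a union bound.

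For the upper tail, I would write for any $t > 0$
$$
P(X \geq 2\lambda) \leq e^{-2\lambda t} E[e^{tX}] = \exp\bigl(\lambda(e^t - 1 - 2t)\bigr),
$$
and optimize by taking $t = \log 2$, which yields $P(X \geq 2\lambda) \leq \exp(-\lambda(2\log 2 - 1))$. For the lower tail, the same argument applied to $E[e^{-tX}]$, with $t = \log 2$ once more, gives $P(X \leq \lambda/2) \leq \exp(-\lambda(1 - \log 2)/2)$.

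All that remains is the elementary numerical check $2\log 2 - 1 \approx 0.386 > 1/10$ and $(1-\log 2)/2 \approx 0.153 > 1/10$, so each of the two tails is individually bounded by $e^{-\lambda/10}$; a union bound over the events $\{X < \lambda/2\}$ and $\{X > 2\lambda\}$ then delivers the stated $2e^{-\lambda/10}$. I do not expect any real obstacle here, as the constants $1/2$, $2$, and $1/10$ in the statement are chosen exactly so that the natural point $t = \log 2$ makes the two Chernoff estimates work simultaneously; the only mildly nontrivial point is recognizing that both optimal exponents happen to exceed $1/10$, which is what makes a single clean rate sufficient for both tails.
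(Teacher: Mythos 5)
Your Chernoff argument is correct: both exponents check out, since $2\log 2 - 1 \approx 0.386 > 1/10$ for the upper tail and $(1-\log 2)/2 \approx 0.153 > 1/10$ for the lower tail, and the union bound over $\{X < \lambda/2\}$ and $\{X > 2\lambda\}$ (which are contained in the events you bound) gives exactly the stated estimate. The paper itself offers no proof, merely citing Equation 2.11 of \cite{rath2011transience}, and your exponential-Markov computation at $t=\log 2$ is the standard derivation of that inequality, so there is nothing to add.
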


\subsection{Coupling of FRI and RI}
\label{coupling}
In this subsection we introduce a coupling of FRI and RI that is crucial in the proof of Lemma \ref{large cap}. Let $K \subset \zd$ be a finite subset, and let $u,T>0$. For any points $x \in K$, let $N_{x,u}$ be i.i.d. Poisson random variables with parameter $u$. Let $\{Y_{x,T}^{(l,i)} +1 \}_{i=1}^{\infty}$ and $\{Y_{x,T}^{(r,i)} +1 \}_{i=1}^{\infty}$ be i.i.d. geometric random variables with parameter $1/(T+1)$. Moreover, for $i \in \mathbb{Z}_{+}$, let $\{S_{n,x}^{(l,i)}\}_{n=0}^{\infty}$ and $\{S_{n,x}^{(r,i)}\}_{n=0}^{\infty}$ be independent copies of simple random walks starting at $x$. Now we can construct a random point measure $\mathcal{I}^T (u,K)$ on $W^{[0,\infty)}$ as follows: for each $x \in K$ and $1 \leq i \leq N_{x,u}$, if
$$
\{S_{n,x}^{(l,i)}\}_{n=1}^{Y_{x,T}^{(l,i)}} \cap K = \emptyset,
$$
we add a delta measure on
$$
 \{S_{n,x}^{(r,i)}\}_{n=0}^{Y_{x,T}^{(r,i)}}
$$
in $\mathcal{I}^T (u,K)$.

The following lemma is a consequence of Lemma \ref{finite measure 2}. Let $\mu_K = \sum_{j=1}^{N_K} \delta_{w_j}$ be the restriction of FRI Poisson point measure on $K$, where $N_K$ is a Poisson random variable with parameter $u \cdot \text{cap}^{(T)} (K)$, and $\{w_j\}_{j \geq 1}$ are i.i.d. killed random walks with distribution $P_{\tilde{e}^{(T)}_K}$ and independent from $N_K$. 

\begin{lemma}
	\label{FRI couple}
	$\mathcal{I}^T (u,K)$ is identically distributed as $\mu_K$. 
\end{lemma}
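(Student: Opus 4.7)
The plan is to verify that $\mathcal{I}^T(u,K)$ is a Poisson point process on $W^{[0,\infty)}$ whose intensity measure coincides with that of $\mu_K$ from Lemma \ref{finite measure 2}, namely $u\cdot \text{cap}^{(T)}(K)\, P_{\tilde e_K^{(T)}}$. Since a PPP is determined by its intensity measure, matching intensities will give the distributional identity.

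The natural first move is to decompose by starting point. The independence built into the construction makes the families $\bigl(N_{x,u},\, \{Y_{x,T}^{(l,i)}, Y_{x,T}^{(r,i)}, S_{\cdot,x}^{(l,i)}, S_{\cdot,x}^{(r,i)}\}_{i\ge 1}\bigr)$ independent across $x \in K$, so it suffices to analyze the contribution of a single $x$ and then superpose. For fixed $x$ and candidate index $i$, the backward test trajectory $(S_{n,x}^{(l,i)})_{n\ge 0}$ is a simple random walk from $x$ and its truncation time $Y_{x,T}^{(l,i)}$ is geometric with $P(Y_{x,T}^{(l,i)} = n) = \tfrac{1}{T+1}\bigl(\tfrac{T}{T+1}\bigr)^n$, independent of the walk. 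The pair therefore has joint law equal to $P_x^{(T)}$, and the single-candidate acceptance probability equals
$$
P\bigl(\{S_{n,x}^{(l,i)}\}_{n=1}^{Y_{x,T}^{(l,i)}} \cap K = \emptyset\bigr) = P_x^{(T)}(\tilde H_K = \infty).
$$

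Since the forward trajectory $(S_{n,x}^{(r,i)})_{n=0}^{Y_{x,T}^{(r,i)}}$ is independent of the backward test and again has law $P_x^{(T)}$, the thinning property of Poisson random variables implies that the number of accepted forward trajectories starting at $x$ is Poisson with mean proportional to $P_x^{(T)}(\tilde H_K = \infty)$, and conditionally on this count the accepted trajectories are i.i.d.\ with law $P_x^{(T)}$. Superposing independent PPPs across $x \in K$, $\mathcal{I}^T(u,K)$ is a Poisson point process on $W^{[0,\infty)}$ with intensity proportional to
$$
\sum_{x\in K} P_x^{(T)}(\tilde H_K = \infty)\, P_x^{(T)}.
$$
Combining with $e_K^{(T)}(x) = 2d\, P_x^{(T)}(\tilde H_K = \infty)\, \ind_{x\in K}$, $\text{cap}^{(T)}(K) = \sum_x e_K^{(T)}(x)$, and $\tilde e_K^{(T)}(x) = e_K^{(T)}(x)/\text{cap}^{(T)}(K)$, this intensity rewrites as a multiple of $\text{cap}^{(T)}(K)\, P_{\tilde e_K^{(T)}}$, which by Lemma \ref{finite measure 2} is the intensity of $\mu_K$, and the claimed equality in distribution follows.

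The core of the argument is the thinning in the second paragraph; its only delicate aspect is the recognition that the backward test trajectory, viewed jointly with its independent geometric truncation time, has law \emph{exactly} $P_x^{(T)}$, so that the single-candidate acceptance probability is $P_x^{(T)}(\tilde H_K = \infty)$ rather than something slightly different. Everything else reduces to the standard thinning and superposition properties of Poisson point processes.
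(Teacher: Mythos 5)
Your proof is correct and follows essentially the same route as the paper's: identify the single-candidate acceptance probability as $P_x^{(T)}(\tilde H_K=\infty)$ (the key observation that the backward walk together with its independent geometric truncation has law exactly $P_x^{(T)}$), then thin and superpose the per-vertex Poisson clouds and match the resulting intensity with $u\cdot\text{cap}^{(T)}(K)\,P_{\tilde e^{(T)}_K}$ from Lemma \ref{finite measure 2}. The only point to tighten is to replace ``proportional to'' and ``a multiple of'' by the exact constants, since equality in distribution of Poisson point processes requires the intensities to coincide exactly, not merely up to a constant factor.
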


\begin{proof}
	Notice that if we fix $x \in K$ and $1 \leq i \leq N_{x,u}$, then
	$$
	P \bigg( \{S_{n,x}^{(l,i)}\}_{n=1}^{Y_{x,T}^{(l,i)}} \cap K = \emptyset \bigg) = P_x^{(T)} (\tilde{H}_K = \infty ) = e_K^{(T)} (x).
	$$
	By Lemma \ref{finite measure 2}, $\mu_K$ is a PPP with intensity measure $u \cdot \text{cap}^{(T)} (K) P_{\tilde{e}^{(T)}_K}$, and by definition
	$$
	e_K^{(T)} (x) = \text{cap}^{(T)} (K) \tilde{e}^{(T)}_K.
	$$
	The result follows from the thinning property of Poisson distributions.
\end{proof}

Consider those trajectories in $\mathcal{I}^T (u,K)$ with length larger or equal to a fixed number $T_0 >0$. We define the random point measure $\hat{\mathcal{I}}^{T,T_0} (u,K)$ as follows: for each $x \in K$ and $1 \leq i \leq N_{x,u}$, if
$$
Y_{x,T}^{(r,i)} \geq T_0,
$$
and
$$
\{S_{n,x}^{(l,i)}\}_{n=1}^{Y_{x,T}^{(l,i)}} \cap K = \emptyset,
$$
we add a delta measure on
$$
\{S_{n,x}^{(r,i)}\}_{n=0}^{Y_{x,T}^{(r,i)}}
$$
in $\hat{\mathcal{I}}^{T,T_0} (u,K)$. Note that by definition $\hat{\mathcal{I}}^{T,T_0} (u,K) \subset \mathcal{I}^T (u,K)$. Here we say $\CI_1\subset \CI_2$ if all edges open in the support of $\CI_1$ is also open in support of $\CI_2$. \par

Now we construct a third random point measure $\bar{\mathcal{I}}^{T, T_0} (u,K)$ which is identically distributed as the collection of all trajectories within a RI traversing $K$, and we also define a $\tilde{\mathcal{I}}^{T,T_0} (u,K) \subset \bar{\mathcal{I}}^{T,T_0} (u,K)$ when all trajectories in $\bar{\mathcal{I}}^{T,T_0} (u,K)$ are truncated at a fixed time $T_0$. For each $x \in K$ and $1 \leq i \leq N_{x,u}$, if
$$
Y_{x,T}^{(r,i)} \geq T_0,
$$
and
$$
\{S_{n,x}^{(l,i)}\}_{n=1}^{\infty} \cap K = \emptyset,
$$
we add a delta measure on
$$
\{S_{n,x}^{(r,i)}\}_{n=0}^{\infty}
$$
in $\bar{\mathcal{I}}^{T,T_0} (u,K)$ and we add a delta measure on 
$$
\{S_{n,x}^{(r,i)}\}_{n=0}^{T_0}
$$
in $\tilde{\mathcal{I}}^{T,T_0} (u,K)$. By definition $\tilde{\mathcal{I}}^{T,T_0} (u,K) \subset \bar{\mathcal{I}}^{T, T_0} (u,K)$ for any $T, T_0 > 0$. If $T_0 = 0$, $\bar{\mathcal{I}}^{T, 0} (u,K)$ is identically distributed as the set of all trajectories in $\mathcal{I}^u$ traversing $K$ but not including the backward parts before they enter $K$ for the first time. We write $ \bar{\mathcal{I}}^{T} (u,K) := \bar{\mathcal{I}}^{T, 0} (u,K)$.

\begin{lemma}
	\label{ri coupling}
	Let $Y+1$ be a geometric random variable with parameter $1/(T+1)$ independent from everything else, and $q = q(T, T_0) := P (Y \geq T_0)$.  Let $\bar{\mu}_K$ be the restriction of PPP for random interlacements at level $uq$ on the set $K$, i.e. $\bar{\mu}_K = \sum_{j=1}^{\bar{N}_K} \delta_{\bar{w}_j}$ is a random point measure, where $\bar{N}_K$ is a Poisson random variable with parameter $uq \cdot \text{cap} (K)$, and $\{\bar{w}_j\}_{j \geq 1}$ are i.i.d. simple random walks with distribution $P_{e_K}$ and independent from $\bar{N}_K$. Then $\bar{\mathcal{I}}^{T, T_0} (u,K)$ is identically distributed as $\bar{\mu}_K = \sum_{j=1}^{\bar{N}_K} \delta_{\bar{w}_j}$.
\end{lemma}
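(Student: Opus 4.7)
The argument parallels Lemma \ref{FRI couple}: I will realize $\bar{\mathcal{I}}^{T,T_0}(u,K)$ as a doubly thinned superposition of Poisson processes and identify its intensity with that of the restriction of random interlacements at level $uq$ to $K$.

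First, I would unpack the construction in Subsection \ref{coupling}. For each $x\in K$, the Poisson$(u)$ count $N_{x,u}$ produces, for each $i$, a forward simple random walk $\{S_{n,x}^{(r,i)}\}_{n\ge 0}$ together with an independent geometric length $Y^{(r,i)}_{x,T}$ and an independent left walk $\{S_{n,x}^{(l,i)}\}_{n\ge 1}$. By the definition of $\bar{\mathcal{I}}^{T,T_0}(u,K)$, a trial is retained precisely when both $Y^{(r,i)}_{x,T}\ge T_0$ and $\{S_{n,x}^{(l,i)}\}_{n\ge 1}\cap K=\emptyset$; the recorded trajectory is then the full forward walk $\{S_{n,x}^{(r,i)}\}_{n\ge 0}$.

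Second, I apply Poisson thinning. The two retention events are independent of each other, and both are independent of the forward walk $\{S_{n,x}^{(r,i)}\}_{n\ge 0}$; their probabilities are $q = q(T,T_0) = P(Y\ge T_0)$ and $P_x(\tilde H_K=\infty)=e_K(x)$ respectively. Hence the number of retained trials based at $x$ is Poisson with parameter $u\cdot q\cdot e_K(x)$, and the counts are independent over $x\in K$. Conditional on retention, the recorded trajectory is an unconditioned simple random walk started at $x$, because the retention event is measurable with respect to random variables independent of $\{S_{n,x}^{(r,i)}\}_{n\ge 0}$. Merging these independent Poisson processes over $x\in K$ (superposition) yields a Poisson point process on $W^{[0,\infty)}$ with intensity
\[
uq\sum_{x\in K} e_K(x)\,P_x \;=\; uq\cdot P_{e_K} \;=\; uq\cdot\text{cap}(K)\cdot P_{\tilde e_K},
\]
which is precisely the law of $\bar\mu_K$: a Poisson$(uq\cdot\text{cap}(K))$ number of i.i.d. simple random walks drawn from the normalized equilibrium measure on $K$.

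I do not anticipate a serious technical obstacle; the result is essentially a Poisson thinning/merging identity. The only point requiring care is the independence structure: the three blocks $Y^{(r,i)}_{x,T}$, $\{S_{n,x}^{(l,i)}\}$, $\{S_{n,x}^{(r,i)}\}$ introduced in Subsection \ref{coupling} are mutually independent by construction, which is exactly what lets the two thinning probabilities factor as $q\cdot e_K(x)$ and guarantees that the retained forward walk carries no residual conditioning. A secondary bookkeeping remark is that $\bar\mu_K$ as stated in the lemma really is the standard description of the restriction of random interlacements at level $uq$ to $K$ (cf.\ \cite{Sznitman2009Vacant,drewitz2014introduction}), so matching intensities suffices.
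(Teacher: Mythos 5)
Your proof is correct and takes essentially the same approach as the paper's: both identify the two independent retention events (the geometric length exceeding $T_0$, with probability $q$, and the backward walk avoiding $K$ forever, with probability $P_x(\tilde H_K=\infty)=e_K(x)$), then use Poisson thinning and superposition to obtain the intensity $uq\sum_{x\in K}e_K(x)\,P_x=uq\cdot\text{cap}(K)\,P_{\tilde e_K}$, matching the restriction of random interlacements at level $uq$ to $K$. The paper's write-up is merely terser, adding only the explicit remark that $e_K(x)=0$ for $x\in K\setminus\partial^{in}K$, which your formula already encodes.
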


\begin{proof}
	This is similar to the proof of Lemma \ref{FRI couple}. For $x \in \partial^{in} K$, 
	$$
	P \bigg( \{S_{n,x}^{(l,i)}\}_{n=1}^{\infty} \cap K = \emptyset \bigg) = P_x (\tilde{H}_K = \infty) = e_K (x).
	$$
	Note that for all $x \in K \setminus \partial^{in} K$,
	$$
	P \bigg( \{S_{n,x}^{(l,i)}\}_{n=1}^{\infty} \cap K = \emptyset \bigg) = 0.
	$$
	The result again follows from the thinning property of Poisson distributions.
\end{proof}

\subsection{Facts about capacity}
We often use the following facts about capacity (or killed one) in our proof.

\begin{lemma}[Proposition $6.5.2$ in \cite{lawler2010random}]
	\label{cap esti}
	There are constants $c_1, c_2 >0$ such that for all $R>0$,
	$$
	c_1 R^{d-2} \leq \text{cap} \big( B(R) \big) \leq c_2 R^{d-2}.
	$$
\end{lemma}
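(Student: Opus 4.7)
The plan rests on the classical hitting-probability identity, valid in the transient regime $d\geq 3$:
\begin{equation*}
P_x(H_A < \infty) \;=\; \sum_{y \in A} G(x,y)\, e_A(y), \qquad x \in \BZ^d,
\end{equation*}
where $G$ is the Green's function of simple random walk (see, for example, Lawler \cite{lawler2010random}). Combined with the sharp Green-function asymptotic $G(x,y) \asymp |x-y|^{-(d-2)}$ for $x \neq y$, which is itself a consequence of the local central limit theorem, the two bounds in the lemma will come from evaluating the above identity at two cleverly chosen source points.

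For the upper bound I would take $x = 0$. Since $0 \in B(R)$, the left-hand side equals $1$, and every $y \in B(R)\setminus\{0\}$ satisfies $|y|\leq dR$, so $G(0,y) \geq c_0 R^{-(d-2)}$. Separating the $y=0$ term, which contributes at most $G(0,0) < \infty$, yields
\begin{equation*}
1 \;\geq\; c_0\, R^{-(d-2)}\,\bigl(\cp(B(R)) - 1\bigr),
\end{equation*}
and hence $\cp(B(R)) \leq c_2 R^{d-2}$ for all $R \geq 1$ after absorbing the lower-order term into the constant.

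For the lower bound I would apply the same identity to a source $x$ sitting outside $B(R)$ at distance of order $R$, for instance $x = 3R\mathbf{e}_1$. Every $y \in B(R)$ then satisfies $|x-y|\leq CR$, so $G(x,y) \leq C' R^{-(d-2)}$, and the identity gives
\begin{equation*}
P_x(H_{B(R)} < \infty) \;\leq\; C'\, R^{-(d-2)}\,\cp(B(R)).
\end{equation*}
It therefore suffices to bound $P_x(H_{B(R)}<\infty)$ below by a positive constant independent of $R$, and this is the only step in the argument that is not a one-line computation. By Donsker's invariance principle together with diffusive scaling, the walk $R^{-1} S_{\lfloor R^2 t \rfloor}$ starting from $3R\mathbf{e}_1$ converges to a Brownian motion started at $3\mathbf{e}_1$, and the probability that such a Brownian motion ever enters the unit box $[-1,1]^d$ is a fixed positive constant; a standard pre-limit argument localizing inside a concentric box of radius $O(R)$ upgrades this to a uniform lower bound valid for all $R\geq 1$. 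Combining the two inequalities delivers $\cp(B(R)) \geq c_1 R^{d-2}$ and hence the lemma.
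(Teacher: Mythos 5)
Your proposal is correct. Note that the paper does not prove this lemma at all: it simply quotes Proposition 6.5.2 of Lawler--Limic \cite{lawler2010random}, and the argument you give is essentially the standard proof one finds there, namely the last-exit decomposition $P_x(H_A<\infty)=\sum_{y\in A}G(x,y)e_A(y)$ combined with the Green's function asymptotics $G(x,y)\asymp |x-y|^{2-d}$; evaluating at $x=0$ gives the upper bound and evaluating at $x=3R\text{e}_1$ gives the lower bound, exactly as you do. Two small remarks. First, the only non-elementary ingredient in your write-up, the Donsker/invariance-principle step used to bound $P_{3R\text{e}_1}(H_{B(R)}<\infty)$ from below uniformly in $R$, can be avoided: by the first-visit decomposition,
$$
P_x\big(H_{B(R)}<\infty\big)\;\geq\;\frac{\sum_{y\in B(R)}G(x,y)}{\sup_{z\in B(R)}\sum_{y\in B(R)}G(z,y)}\;\geq\;\frac{c\,R^{d}\cdot R^{2-d}}{C\,R^{2}}\;=\;c'>0,
$$
where the denominator is bounded by summing $G(z,y)\asymp|z-y|^{2-d}$ over dyadic shells; this keeps the whole proof at the level of Green's function estimates and sidesteps the (fixable, but slightly delicate) issue of transferring a hitting event through weak convergence. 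Second, your constants are produced for $R\geq 1$, while the statement is for all $R>0$; this is harmless since for $0<R<1$ one has $B(R)=\{0\}$ and $\text{cap}(B(R))$ is a fixed positive constant, so the constants $c_1,c_2$ only need a trivial adjustment.
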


%\begin{lemma}[Subadditivity of Capacity; Lemma $1.11$ in \cite{drewitz2014introduction}]
%For any finite set $E_1, E_2 \subset \zd$,
%$$
%\text{cap} (E_1 \cup E_2) \leq \text{cap} (E_1) + \text{cap} (E_2).
%$$
%\end{lemma}

%\begin{lemma}[Subadditivity of Killed Capacity]
%	For any finite sets $E_1, E_2 \subset \zd$ and for all $T>0$,
%	$$
%	\text{cap}^{(T)} (E_1 \cup E_2) \leq \text{cap}^{(T)} (E_1) + \text{cap}^{(T)} (E_2).
%	$$
%\end{lemma}
%\begin{proof}
%Follows the proof of Lemma $1.11$ in \cite{drewitz2014introduction} using the killed equilibrium measure.
%\end{proof}

\begin{lemma}[Monotonicity of Capacity; Exercise $1.15$ in \cite{drewitz2014introduction}]
For any finite sets $E_1 \subset E_2 \subset \zd$,
$$
\text{cap} (E_1) \leq \text{cap} (E_2).
$$	
\end{lemma}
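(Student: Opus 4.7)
The plan is to deduce monotonicity of capacity from the (obvious) monotonicity of hitting probabilities, via the last-exit decomposition and the Green's function asymptotics. The key ingredients are standard facts about simple random walk on $\BZ^d$ ($d\ge 3$), all of which are in \cite{lawler2010random}.

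\emph{Step 1 (last-exit identity).} For every finite $A\subset\BZ^d$ and every $x\in\BZ^d$, the last-exit decomposition yields
$$
P_x(H_A<\infty)\;=\;\sum_{y\in A} G(x,y)\, e_A(y),
$$
where $G(x,y)=\sum_{n\ge 0}P_x(S_n=y)$ is the Green's function of simple random walk on $\BZ^d$. (This is immediate after conditioning on the last visit to $A$ and applying reversibility of SRW.)

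\emph{Step 2 (monotonicity of hitting probabilities).} Since $E_1\subset E_2$, any trajectory hitting $E_1$ also hits $E_2$, so
$$
P_x(H_{E_1}<\infty)\;\le\;P_x(H_{E_2}<\infty)\qquad\text{for every }x\in\BZ^d.
$$

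\emph{Step 3 (pass to the limit).} Recall the classical estimate $G(x,y)=c_d\,|x-y|^{2-d}(1+o(1))$ as $|x-y|\to\infty$ (with $c_d>0$). Since $E_1\cup E_2$ is finite, this gives, uniformly in $y\in E_1\cup E_2$,
$$
G(x,y)\;=\;c_d\,|x|^{2-d}\bigl(1+o(1)\bigr)\qquad\text{as }|x|\to\infty.
$$
Substituting into the identity of Step 1 and multiplying by $|x|^{d-2}$,
$$
|x|^{d-2}P_x(H_{E_i}<\infty)\;\xrightarrow[|x|\to\infty]{}\;c_d\sum_{y\in E_i}e_{E_i}(y)\;=\;c_d\,\text{cap}(E_i),\qquad i=1,2.
$$
Combining this with the inequality in Step 2 (which is preserved after multiplying by $|x|^{d-2}$ and taking the limit) gives $c_d\,\text{cap}(E_1)\le c_d\,\text{cap}(E_2)$, and the lemma follows.

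The only nontrivial point is the uniform Green's function asymptotic used in Step 3, but this is standard and uses only that $E_2$ is finite (so $|x-y|/|x|\to 1$ uniformly in $y\in E_2$). A direct, non-asymptotic alternative would be the Dirichlet variational characterization $\text{cap}(A)=\inf\{\mathcal E(f,f): f\ge\mathbbm 1_A\}$, which makes monotonicity immediate since the feasible set for $E_2$ is contained in that for $E_1$; however this requires introducing the Dirichlet form, which is not otherwise needed here, so the hitting-probability route above seems preferable.
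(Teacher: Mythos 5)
Your proof is correct. Note that the paper itself gives no argument for this lemma at all: it simply cites Exercise 1.15 of \cite{drewitz2014introduction}, so there is no "paper proof" to compare against, and your write-up supplies exactly the kind of justification the citation leaves to the reader. Your route -- last-exit decomposition $P_x(H_A<\infty)=\sum_{y\in A}G(x,y)e_A(y)$, trivial monotonicity of hitting probabilities, and the Green's function asymptotics to recover $\text{cap}(E_i)$ as $\lim_{|x|\to\infty}|x|^{d-2}P_x(H_{E_i}<\infty)/c_d$ -- is one of the two standard solutions of that exercise, and all three steps are valid for $d\ge 3$ (transience is what makes $G$ finite and the last-exit formula available, and finiteness of $E_2$ gives the uniformity of $|x-y|^{2-d}=|x|^{2-d}(1+o(1))$ over $y$). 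The other standard solution is the Dirichlet variational characterization you mention at the end, where monotonicity is immediate from the inclusion of feasible sets; it avoids any asymptotics but requires setting up the Dirichlet form. It is worth emphasizing why some such detour is needed: termwise comparison of the equilibrium measures fails, since enlarging the set increases the number of summands in $\text{cap}(A)=\sum_y e_A(y)$ while decreasing each escape probability $P_y(\tilde H_A=\infty)$, so monotonicity is genuinely not obvious from the definition -- your argument handles this correctly by comparing the sets through a quantity (the hitting probability from far away) that is manifestly monotone.
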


\subsection{Condition $(1^{*})$}
\label{1star}

Similar to \cite{rath2011transience}, we may write 
$$
\mathcal{FI}^{u/2,T}_1=\bigcup_{k=1}^{d-2} \mathcal{FI}^{u/(2d-4),T}_{1,k},
$$
where $\mathcal{FI}^{u/(2d-4),T}_{1,k}$ are i.i.d. copies of finitary interlacements with intensity level $u/(2d-4)$ and average stopping time $T$. By translation invariance, one may without loss of generality prove the desired result for $i=4R$ and $j=1$. This case, we have $x_{4R,1}=0$, $b_{4R,1}(R)=B(R)$, and $\hat b_{4R,1}(R)=B(2R)$. 

To begin with, let us consider the following random variable 
$$
N^{(1)}_{4R,1}:=\left|\left\{x\in B(R), cap\left(\CC\Big(x,\mathcal{FI}^{u/(2d-4),R^3}_{1,1}\cap B(R+R^{0.9})\Big)\right)>c_0 R^{0.7}\right\} \right|
$$
and event $A^{(1)}_{4R,1}=\{ N^{(1)}_{4R,1}\ge 1\}$, where $c_0>0$ is the constant in Lemma 6, \cite{rath2011transience}, which is independent to $R$. We first prove that
\begin{lemma}
	\label{large cap}
There is a constant $c = c(u)>0$ such that for all sufficiently large $R$, $P(A^{(1)}_{4R,1})\ge 1-\exp(-c R^{d-2})$. 
\end{lemma}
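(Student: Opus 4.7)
The approach is to dominate $\mathcal{FI}^{u/(2d-4),R^3}_{1,1}\cap B(R+R^{0.9})$ from below by a random interlacement at a positive level, and then invoke Lemma 6 of \cite{rath2011transience}, which supplies exactly a capacity lower bound of the desired form with the same constant $c_0$ that appears in the statement.

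Set $K=B(R+R^{0.9})$ and $u'=u/(2d-4)$, and pick a truncation time $T_0$ with $T_0\to\infty$ and $T_0/R^3\to 0$ as $R\to\infty$; a concrete choice is $T_0=R^{5/2}$. Working in the coupling of Subsection \ref{coupling} at parameters $(u',R^3)$, we have the inclusions of induced subgraphs
$$
\tilde{\mathcal{I}}^{R^3,T_0}(u',K)\subset\hat{\mathcal{I}}^{R^3,T_0}(u',K)\subset\mathcal{I}^{R^3}(u',K),
$$
and by Lemma \ref{FRI couple} the rightmost process is distributed as the set of $\mathcal{FI}^{u',R^3}_{1,1}$-trajectories hitting $K$. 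Thus it suffices to produce, with probability at least $1-\exp(-cR^{d-2})$, a vertex $x\in B(R)$ in the range of $\tilde{\mathcal{I}}^{R^3,T_0}(u',K)$ whose cluster within $B(R+R^{0.9})$ has capacity at least $c_0R^{0.7}$.

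By Lemma \ref{ri coupling}, $\bar{\mathcal{I}}^{R^3,T_0}(u',K)$ is distributed as the set of trajectories of a random interlacement $\mathcal{I}^{u'q}$ that hit $K$, with $q=(R^3/(R^3+1))^{T_0}\to 1$ as $R\to\infty$. For $R$ large we have $u'q\geq u/(4d-8)>0$, so Lemma 6 of \cite{rath2011transience} applies at this level and produces, with probability at least $1-\exp(-c_1R^{d-2})$, a vertex $x\in B(R)$ in the range of $\bar{\mathcal{I}}^{R^3,T_0}(u',K)$ whose cluster within $B(R+R^{0.9})$ has capacity at least $c_0R^{0.7}$.

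It remains to transfer this cluster from $\bar{\mathcal{I}}^{R^3,T_0}(u',K)$ to its subset $\tilde{\mathcal{I}}^{R^3,T_0}(u',K)$, which differs only by truncating every trajectory at time $T_0$. The principal obstacle lies precisely in this final step: the range inside $B(R+R^{0.9})$ may shrink under truncation, and one must quantify this effect so as to preserve the capacity lower bound on an event of probability $1-\exp(-c_2R^{d-2})$. The reason to expect this to work is that each RI trajectory typically spends total time only $O(R^2)\ll T_0$ inside $K$, so truncation at $T_0=R^{5/2}$ is vacuous for most trajectories; the delicate point is upgrading this heuristic to the required exponential tail estimate so as to absorb the union bound over the Poisson number (of mean $O(R^{d-2})$) of trajectories hitting $K$. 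Once this last comparison is in hand, combining the three steps yields $P(A^{(1)}_{4R,1})\geq 1-\exp(-cR^{d-2})$ for an appropriate $c=c(u)>0$.
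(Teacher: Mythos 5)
There is a genuine gap, and it sits exactly where you lean on Lemma 6 of \cite{rath2011transience}. That lemma is a statement about a \emph{single} simple random walk: it bounds from below the probability that the trace of the first $T_1$ steps stays in a box of radius $T_1^{(1+\epsilon)/2}$ and has capacity at least $c_0 T_1^{(1-\epsilon)/2}$ (the paper applies it with $T_1=R^{1.6}$, $\epsilon=1/8$, yielding a per-trajectory success probability $q_1(R)>1/2$). It is \emph{not} a statement about random interlacements producing, with probability $1-\exp(-c_1R^{d-2})$, a vertex of $B(R)$ whose cluster in $B(R+R^{0.9})$ has capacity $c_0R^{0.7}$; that assertion is essentially the content of Lemma \ref{large cap} itself, so your second step assumes what is to be proved. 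The exponent $R^{d-2}$ has to come from somewhere, namely from the Poisson number of trajectories hitting $B(R)$, whose mean is comparable to $u\,\mathrm{cap}(B(R))\asymp R^{d-2}$, combined with independent per-trajectory successes (each with probability bounded below by a constant via Lemma 6); this thinning/Poisson-concentration structure, which is the heart of the paper's proof, is absent from your argument. A related defect is your choice $K=B(R+R^{0.9})$: the trajectories of the dominating process hitting $K$ enter at $\partial^{in}B(R+R^{0.9})$ and need not visit $B(R)$ at all, so even granting everything else you have no vertex $x\in B(R)$ in the resulting cluster. The paper instead couples on $K=B(R)$, so that each forward walk starts at some $x\in\partial^{in}B(R)$ and, on the good event, its first $R^{1.6}$ steps stay in $x+B(R^{0.9})\subset B(R+R^{0.9})$.

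Finally, the step you yourself flag as the ``principal obstacle'' (transferring the capacity bound from $\bar{\mathcal{I}}^{R^3,T_0}(u',K)$ to the truncated $\tilde{\mathcal{I}}^{R^3,T_0}(u',K)$) is left unproved, and your heuristic for it (total time spent in $K$ is $O(R^2)$) does not by itself give the required estimate, since a trajectory may revisit $K$ after time $T_0$ and the truncation then removes part of its range in $B(R+R^{0.9})$. This whole difficulty is an artifact of detouring through untruncated RI and back: the paper's proof never needs such a comparison, because it works directly with the FRI trajectories hitting $B(R)$ via the coupling of Subsection \ref{coupling}, demands only that the geometric lifetime satisfy $Y^{(r,i)}_{x,R^3}\ge R^{1.6}$ (probability close to $1$ since $R^{1.6}\ll R^3$), and applies Lemma 6 to the first $R^{1.6}$ steps only, after which Lemma \ref{poi concent} and Lemma \ref{cap esti} finish the proof. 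To repair your write-up you would have to replace the appeal to ``Lemma 6 at level $u'q$'' by this per-trajectory argument, at which point you have reproduced the paper's proof rather than an alternative route.
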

\begin{proof}
Note that $N^{(1)}_{4R,1}$ is determined by trajectories within $\mathcal{FI}^{u/(2d-4),R^3}_{1,1}$ traversing $B(R)$, which can be sampled according to Subsection \ref{coupling}. Recalling the notations used there, we have $N^{(1)}_{4R,1}$ stochastically dominates the random variable $\hat N^{(1)}_{4R,1}$ where 
$$
\begin{aligned}
\hat N^{(1)}_{4R,1}=\Big| \Big\{&(x,i)\in \partial^{in} B(R)\times \BZ^+, \ s.t. \ i\le N_{x, u/(2d-4)}, \ \{S_{n,x}^{(l,i)}\}_{n=1}^{\infty}\cap B(R)=\emptyset, \\
& Y_{x,R^3}^{r,i}\ge R^{1.6}, \ \{S_{n,x}^{(r,i)}\}_{n=1}^{R^{1.6}}\subset x+B(R^{0.9}), \ cap\left(\{S_{n,x}^{(r,i)}\}_{n=1}^{R^{1.6}} \right)>c_0 R^{0.7}\Big\} \Big|,
\end{aligned}
$$
and $c_0$ is the same constant in the definition of $N^{(1)}_{4R,1}$. Note that for each $(x,i)$, the events 
\begin{align*}
	&\{i\le N_{x, u/(2d-4)}\},\\
	&\left\{ \{S_{n,x}^{(l,i)}\}_{n=1}^{\infty}\cap B(R)=\emptyset\right\},\\
	&\left\{ Y_{x,R^3}^{r,i}\ge R^{1.6} \right\},\\
	&\left\{ \{S_{n,x}^{(r,i)}\}_{n=1}^{R^{1.6}}\subset x+B(R^{0.9}), \ cap\left(\{S_{n,x}^{(r,i)}\}_{n=1}^{R^{1.6}} \right)>c_0 R^{0.7} \right\}
\end{align*}
are independent to each other. At the same time 
$$
P\left( \{S_{n,x}^{(l,i)}\}_{n=1}^{\infty}\cap B(R)=\emptyset\right)=e_{B(R)}(x)
$$
while 
$$
P\left(Y_{x,R^3}^{r,i}\ge R^{1.6}, \ \{S_{n,x}^{(r,i)}\}_{n=1}^{R^{1.6}}\subset x+B(R^{0.9}), \ cap\left(\{S_{n,x}^{(r,i)}\}_{n=1}^{R^{1.6}} \right)>c_0 R^{0.7}\right)=q_1(R)>1/2 
$$
for all sufficiently large $R$. The last inequality is derived from 
\begin{enumerate}[(1)]
	\item The PMF estimate of geometric random variable $Y_{x,R^3}^{r,i}$. 
	\item Hoeffding’s inequality. 
	\item Lemma 6, \cite{rath2011transience} with $T_1=R^{1.6}$ and $\ep=1/8$.  
\end{enumerate}
Thus we have 
$$
\hat N^{(1)}_{4R,1}\sim Poisson\Big(q_1(R)cap(B(R))u/(2d-4) \Big)
$$
and the desired result follows from Lemma \ref{poi concent} and Lemma \ref{cap esti}. 

%\note{Since $\hat N^{(1)}_{4R,1}$ is Poisson and we want $\hat N^{(1)}_{4R,1} \geq 1$, I guess we don't need Lemma 6.2 for this. (so we can delete Lemma 6.2) And the decay can be $R^{d-2}$ instead of $R$?} 
\end{proof} 

Given the event $A^{(1)}_{4R,1}$, one may sample a point uniformly at random from the random subset 
$$
S_{4R,1}=\left\{x\in B(R), cap\left(\CC\Big(x,\mathcal{FI}^{u/(2d-4),R^3}_{1,1}\cap B(R+R^{0.9})\Big)\right)>c_0 R^{0.7}\right\}
$$
and denote it by $x^{(1)}_{4R,1}$. Moreover, for the random subset 
$$
Com^{(1)}_{4R,1}=\CC\Big(x^{(1)}_{4R,1},\mathcal{FI}^{u/(2d-4),R^3}_{1,1}\cap B(R+R^{0.9})\Big)
$$
by definition we have 
$$
cap\left(Com^{(1)}_{4R,1} \right)> c_0 R^{0.7}. 
$$
Now for any $k=2,3,\cdots, d-2$ may define 
$$
Com^{(k)}_{4R,1}=\CC\left(Com^{(k-1)}_{4R,1},\mathcal{FI}^{u/(2d-4),R^3}_{1,k}\cap B(R+kR^{0.9})\right)
$$
together with the event 
$$
A^{(k)}_{4R,1}=\left\{cap\big(Com^{(k)}_{4R,1}\big)>c_0^k R^{0.7 k}\right\}. 
$$
Note that for any $k=2,3,\cdots, d-2$, $Com^{(k-1)}_{4R,1}$ is measurable with respect to 
$$
\sigma_{k-1}=\sigma\left(\mathcal{FI}^{u/(2d-4),R^3}_{1,1}, \mathcal{FI}^{u/(2d-4),R^3}_{1,2},\cdots, \mathcal{FI}^{u/(2d-4),R^3}_{1,k-1} \right)
$$
which is independent to $\mathcal{FI}^{u/(2d-4),R^3}_{1,k}$. Let $\CC_{0}^{(k-1)}$ be a connected component within $B(R+(k-1)R^{0.9})$ such that
$$
cap(\CC_{0}^{(k-1)})>c_0^{k-1} R^{0.7 (k-1)}.
$$
Given $Com^{(k-1)}_{4R,1}=\CC_{0}^{(k-1)}$, the distribution of $Com^{(k)}_{4R,1}$ is determined by the configuration of trajectories in $\mathcal{FI}^{u/(2d-4),R^3}_{1,k}$ traversing $\CC_{0}^{(k-1)}$, which can again be sampled according to Subsection 5.1: 
\begin{itemize}
	\item For each $x\in \CC_{0}^{(k-1)}$, let $N^{(k)}_{x, u/(2d-4)}$ be i.i.d. Poisson random variables independent to $\sigma_{k-1}$ with intensity $u/(2d-4)$.  
	\item For each $x\in \CC_{0}^{(k-1)}$, and positive integer $i$, let $\{S_{n,x}^{(l,i,k)}\}_{n=1}^{\infty}$ and $\{S_{n,x}^{(r,i,k)}\}_{n=1}^{\infty}$ be independent simple random walks starting from $x$. 
	\item For each $x\in \CC_{0}^{(k-1)}$, and positive integer $i$, let $Y_{x,R^3}^{r,i,k}$ and $Y_{x,R^3}^{l,i,k}$ be independent geometric random variables with parameter $p=1/(1+R^3)$.
\end{itemize}
Recalling the construction in Subsection \ref{coupling}, one has 
$$
\begin{aligned}
&P\left(A^{(k)}_{4R,1}\big| Com^{(k-1)}_{4R,1}=\CC_{0}^{(k-1)}\right)\\
\ge &P\left(cap\left(\bigcup_{(x,i)\in I^{(k-1)}_{4R,1}} \{S_{n,x}^{(r,i,k)}\}_{n=1}^{R^{1.6}}\right)>c_0^k R^{0.7 k}, \ \{S_{n,x}^{(r,i,k)}\}_{n=1}^{R^{1.6}}\subset x+B(R^{0.9}), \ \forall (x,i)\in  I^{(k-1)}_{4R,1} \right)
\end{aligned}
$$
where
$$
I^{(k-1)}_{4R,1}=\Big\{(x,i)\in \partial^{in} \CC_{0}^{(k-1)}\times \BZ^+, \ s.t. \ i\le N^{(k)}_{x, u/(2d-4)}, \ \{S_{n,x}^{(l,i,k)}\}_{n=1}^{\infty}\cap \CC_{0}^{(k-1)} =\emptyset, \
 Y_{x,R^3}^{r,i,k}\ge R^{1.6} \Big\}. 
$$
Note that the set $I^{(k-1)}_{4R,1}$ has the same law as the set of trajectories in $$\bar{\mathcal{I}}^{R^3,R^{1.6}}(u/(2d-4),\CC_{0}^{(k-1)}),$$ By Lemma \ref{ri coupling}, for all sufficiently large $R$, $I^{(k-1)}_{4R,1}$ has the same law as random  interlacements at level $uq/(2d-4)$ hitting $\CC_{0}^{(k-1)}$ for some $q > 1/2$. Recall that $\CC_{0}^{(k-1)}$ is a fixed set. By Lemmas $7$ and Lemma $8$ (with $s=1$ there) in \cite{rath2011transience},
$$
P\left(A^{(k)}_{4R,1}\big| Com^{(k-1)}_{4R,1}=\CC_{0}^{(k-1)}\right)\ge 1-\exp(-R^{1/17})
$$
for all sufficiently large $R$. Thus we have proved that 
\begin{equation}
\label{condition_1}
P(\tilde{E}_{4R,1}\not=\emptyset)\ge P\left(\bigcap_{k=1}^{d-2}A^{(k)}_{4R,1} \right)
\ge 1-\exp(-R^{1/18})
\end{equation}
for all sufficiently large $R$. 

\subsection{Condition $(2^{*})$}
\label{2star}

Again, Condition ($2^{*}$) can be without loss of generality checked for $b_{4R,1}(R)$ and $b_{4R+1,1}(R)$. One may follow a similar argument as Subsection \ref{1star} to check Condition ($2^{*}$). To be precise, one can pick any two points $x_0,x_1$ from $\tilde{E}_{4R,1}$ and $\tilde{E}_{4R+1,1}$. Then we can look at the paths in $\mathcal{FI}_{2}^{u/2,R^3}$ (which is independent to $\mathcal{FI}_{1}^{u/2,R^3}$) traversing $\tilde{\mathcal{C}}_{4R,1}(x_0)$. We keep only those whose backward part never returning to $\tilde{\mathcal{C}}_{4R,1}(x_0)$ while the forward part is not truncated until the $R^{2.5}$th step. Then one can apply Lemma 11 and 12 in \cite{rath2011transience} for intensity $u/4$ to prove that with stretch exponentially high probability, at least one of the paths we kept in the procedure above has to intersect with $\tilde{\mathcal{C}}_{4R+1,1}(x_1)$ before they exit $B(4Re_1, CR)$, where $C$ is the same constant as in Lemma 11 of \cite{rath2011transience}. 

However, since for the finitary random interlacements, one can only guarantee that the first $R^{2.5}$ steps in the forward paths we keep are within $\mathcal{FI}_{2}^{u/2,R^3}$. So the only extra estimate needed is the following lower bound on the first exiting time of $B(CR)$. 
\begin{lemma}
	\label{connect before escape}
	There is a $c>0$ independent to $R$ such that 
	$$
	P_0(H_{\partial^{out}B(CR)}>R^{2.5})<\exp(-c R^{0.5}). 
	$$
\end{lemma}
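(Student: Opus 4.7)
The plan is to decompose the interval $[0,R^{2.5}]$ into $\lfloor R^{0.5}/C_0\rfloor$ sub-intervals each of length $C_0 R^2$, where $C_0$ is a sufficiently large constant to be chosen, and to use the strong Markov property at the endpoints to show that in each sub-interval the walk has a uniform positive probability of exiting $B(CR)$. Iterating yields a bound of the form $(1-p)^{R^{0.5}/C_0}$, which gives the desired stretched-exponential tail.

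The key ingredient I would establish first is the following one-step estimate: there exist constants $C_0>0$ and $p=p(d,C)>0$, both independent of $R$, such that for every starting point $x\in B(CR)$,
\begin{equation*}
P_x\bigl(H_{\partial^{out} B(CR)}\le C_0 R^2\bigr)\ge p.
\end{equation*}
To prove this I would project onto the first coordinate: the process $W_t:=(X_t-x)_1$ is a lazy $\pm 1$ random walk with drift zero and step variance $1/d$, so by the central limit theorem (or, quantitatively, by Hoeffding's inequality applied to the sum of the $1/d$-fraction of non-zero increments) one has
\begin{equation*}
P_x\bigl(|W_{C_0 R^2}|> 2CR\bigr)\longrightarrow P\bigl(|Z|> 2C\sqrt{d/C_0}\bigr)
\end{equation*}
as $R\to\infty$, where $Z$ is standard normal. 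Taking $C_0$ large enough makes the right-hand side at least $1/2$, and since $|x_1|\le CR$, the event $|W_{C_0 R^2}|>2CR$ forces $|(X_{C_0 R^2})_1|>CR$, hence $X_{C_0 R^2}\notin B(CR)$. In particular the exit time from $B(CR)$ is at most $C_0 R^2$ on this event. This gives the uniform lower bound $p\ge 1/3$ for all sufficiently large $R$.

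Given the one-step bound, the conclusion is immediate. Setting $\tau=C_0 R^2$ and $N=\lfloor R^{0.5}/C_0\rfloor$, I would apply the strong Markov property at each time $k\tau$ for $k=0,1,\dots,N-1$: on the event $\{H_{\partial^{out}B(CR)}>k\tau\}$ the walker is at some site in $B(CR)$, so the conditional probability of not exiting during the next $\tau$ steps is at most $1-p$. Chaining these estimates gives
\begin{equation*}
P_0\bigl(H_{\partial^{out}B(CR)}>R^{2.5}\bigr)\le P_0\bigl(H_{\partial^{out}B(CR)}>N\tau\bigr)\le (1-p)^N\le \exp(-c R^{0.5})
\end{equation*}
with $c=p/(2C_0)$, which is the desired bound.

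The only delicate point is ensuring the exit probability $p$ is bounded below \emph{uniformly in the starting point $x\in B(CR)$}; that is why I would phrase the one-step estimate via the displacement $X_t-x$ rather than via hitting a fixed boundary. Everything else is routine Markov-chain decomposition and a Gaussian tail estimate, so I do not expect any further obstacle.
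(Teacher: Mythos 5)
Your proposal is correct and follows essentially the same route as the paper: both chop time into roughly $R^{0.5}$ blocks of length of order $R^{2}$, obtain a uniform-in-starting-point positive exit probability per block from the CLT (the paper gets uniformity by bounding $\sup_{x\in B(CR)}P_x(H_{\partial^{out}B(CR)}>R^{2})$ by $P_0(H_{\partial^{out}B(2CR)}>R^{2})$, you by tracking the first coordinate of the displacement $X_t-x$), and then iterate with the Markov property to get $(1-p)^{\,cR^{0.5}}$. One small caveat: the parenthetical appeal to Hoeffding's inequality points the wrong way for a \emph{lower} bound on the displacement tail, but the CLT you lead with is exactly what the paper invokes, so nothing is lost.
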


\begin{proof}
By central limit theorem/invariance principle, there is a constant $c>0$ such that  
\begin{equation}
\label{CLT_escape}
\sup_{x\in B(CR)}P_x(H_{\partial^{out}B(CR)}>R^{2})\le P_0(H_{\partial^{out}B(2CR)}>R^{2})\le 1-c<1. 
\end{equation}
Then for each $i=1,2,\cdots, [R^{0.5}]$, consider event
$$
Es_i=\{H_{\partial^{out}B(CR)}>i*R^{2}\}.
$$
Then by \eqref{CLT_escape} and Markov property we have
$$
P_0(Es_1)\le 1-c, 
$$
and
$$
P_0(Es_{i+1}|Es_{i})\le \sup_{x\in B(CR)}P_x(H_{\partial^{out}B(CR)}>R^{2})\le 1-c,
$$
for all $i\ge 1$. Thus
$$
P_0(H_{\partial^{out}B(CR)}>R^{2.5})\le P_0(Es_{\lfloor R^{0.5}\rfloor}) \le (1-c)^{\lfloor R^{0.5}\rfloor}<\exp(-c R^{0.5}). 
$$
\end{proof}

\begin{remark}
An alternative argument following (2.9) of \cite{MR3770879} derives a slightly weaker result, but also suitable for the use here. 
\end{remark}

Suppose $U,V \subset B(CR)$. Taking $q=1/2$ in Lemma \ref{ri coupling}, we know that for all sufficiently large $R$, the set of trajectories in $\bar{\mathcal{I}}^{R^3, R^{2.5}}(u/2, U)$ stochastically dominates the ones in $\mathcal{I}^{u/4}$ hitting $U$. Combining this fact with Lemma \ref{connect before escape} and Lemmas $11$, $12$ of \cite{rath2011transience}, we have
\begin{equation}
\label{connect 2 sets}
P \Bigg( U \xleftrightarrow{\tilde{\mathcal{I}}^{R^3, R^{2.5}}(u/2, U) \cap B(CR)} V \Bigg) \geq 1 - c_1 e^{-c_2 \min\{R^{0.5}, R^{2-d} \text{cap} (U) \text{cap} (V)\}}.
\end{equation}
Replacing $U$ and $V$ by $\tilde{\mathcal{C}}_{4R,1}(x_0)$ and $\tilde{\mathcal{C}}_{4R+1,1}(x_1)$ in (\ref{connect 2 sets}), we prove Condition $(2^{*})$.

\subsection{Condition $(1)$ and $(2)$}
We recall the construction of FRI in Definition \ref{informal}. We first show that with high probability no killed random walks of $\mathcal{FI}^{u, R^3}$ starting in $\zd \setminus B(128R^2)$ intersect with $\hat{B} (R)$. Define the event
$$
G(u, R) := \Big\{ \text{No killed random walks of } \mathcal{FI}^{u, R^3} \text{ starting in } \zd \setminus B(128R^2) \text{ reach } \hat{B} (R) \Big\}.
$$

\begin{lemma}
\label{no outside}
For all $u >0$, we have
$$
\lim_{R \rightarrow \infty} P \big( G(u, R) \big) = 1.
$$
\end{lemma}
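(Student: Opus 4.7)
The plan is a first-moment calculation that leverages the Poisson thinning structure of $\mathcal{FI}^{u,R^3}$. Let $W$ be the number of killed random walks in $\mathcal{FI}^{u,R^3}$ whose starting vertex lies outside $B(128R^2)$ and whose trajectory intersects $\hat{B}(R)$. By Definition \ref{informal}, the walks starting from different sites form independent Poisson clouds, so after thinning $W$ is itself Poisson with mean
\[
E[W]\;=\;\frac{2du}{R^3+1}\sum_{x\notin B(128R^2)} h_R(x),\qquad h_R(x):=P_x^{(R^3)}\bigl(\tau_{\hat{B}(R)}<\infty\bigr),
\]
where $h_R(x)$ is the probability that a geometrically killed SRW started at $x$ hits $\hat B(R)$ before being killed. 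Since $G(u,R)^c=\{W\ge 1\}$, we have $P(G(u,R))=e^{-E[W]}$, so the lemma reduces to proving $E[W]\to 0$ as $R\to\infty$.

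The core step is a sharp decay bound on $h_R(x)$. For $x\notin B(128R^2)$ set $d(x):=\mathrm{dist}_\infty(x,\hat{B}(R))=\|x\|_\infty-64R^2\ge \|x\|_\infty/2$. Let $\tau$ be the first hitting time of $\hat{B}(R)$ by the underlying (unkilled) SRW and $L\sim\mathrm{Geom}(1/(R^3+1))$ the independent killing time, so $h_R(x)=P_x(\tau\le L)$. For any threshold $k_0$,
\[
h_R(x)\;\le\; P_x(\tau\le k_0)+P(L\ge k_0).
\]
A coordinate-wise Azuma inequality together with Doob's maximal inequality yields $P_x(\tau\le k_0)\le C\exp(-c\,d(x)^2/k_0)$, while $P(L\ge k_0)\le \exp(-c\,k_0/R^3)$. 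Choosing $k_0\asymp d(x) R^{3/2}$ to balance the two exponents gives
\[
h_R(x)\;\le\; C\exp\bigl(-c\,d(x)/R^{3/2}\bigr).
\]

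Substituting this estimate back and grouping the sum by $n=\|x\|_\infty$ (on the shell $\{\|x\|_\infty=n\}$ there are $O(n^{d-1})$ sites),
\[
E[W]\;\le\;\frac{C}{R^3}\sum_{n>128R^2} n^{d-1}\exp\bigl(-c\,n/R^{3/2}\bigr)\;\le\; C' R^{3(d-2)/2}\exp(-c''R^{1/2})\;\xrightarrow{R\to\infty}\;0,
\]
since after the substitution $m=n/R^{3/2}$ the tail is dominated by its value at $n\approx 128 R^2$, whose exponent is of order $-R^{1/2}$. The main obstacle is the decay estimate for $h_R(x)$: a naive Green's-function bound, $h_R(x)\le P_x(\tau<\infty)\le C R^{2(d-2)}/\|x\|_\infty^{d-2}$, leads to a divergent sum because it ignores the killing. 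The essential point is that a walk with expected length $R^3$ cannot travel much farther than $R^{3/2}$, and it is this diffusive cutoff that produces the stretched-exponential decay in $d(x)/R^{3/2}$ that closes the sum.
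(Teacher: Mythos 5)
Your proposal is correct and follows essentially the same route as the paper: the key estimate in both is that a walk killed at rate $1/(R^3+1)$ started at distance $\gtrsim R^2$ from $\hat{B}(R)$ hits it with stretched-exponentially small probability, obtained by splitting at a time threshold and combining an Azuma/maximal inequality for the walk with the geometric tail of the killing time, and then summing over all exterior starting points. The only difference is bookkeeping: you compute the exact Poisson mean of the number of offending trajectories (so no bound on the per-site counts $N_x$ is needed), whereas the paper decomposes the exterior into shells $A(m,R)$ and union-bounds using a Poisson tail estimate for $N_x$ — both yield the same polynomial-times-$e^{-cR^{1/2}}$ bound.
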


\begin{proof}
We first fix $u >0$ and $R >0$. We define a sequence of subsets $\{A(m, R)\}_{m=1}^{\infty}$ of $\zd$. Let
$$
A(1, R) := B \big( (128 + 64)R^2 \big) \setminus B(128R^2),
$$
and for all $m > 1$,
$$
A(m, R) := B \big( (128 + 64m)R^2 \big) \setminus B \big( (128 + 64(m-1) )R^2 \big) 
$$
Note that $\{A(m, R)\}_{m=1}^{\infty}$ are pairwise disjoint, and
$$
\zd = \bigg( \hat{B} (R) \cup \bigcup_{m=1}^{\infty} A(m, R) \bigg). 
$$
Let $x \in A(m, R) \cap \zd$ for some $m \geq 1$. Recall the construction of FRI in Definition \ref{informal}. Let $N_x$ be the number of killed random walks starting at $x$, so $N_x$ is a Poisson distribution with parameter $2du/(R^3+1)$. By Markov inequality, for all sufficiently large $R$, 
$$
P \bigg( N_x > \frac{2dumR^4}{R^3+1} \bigg) \leq E[e^{N_x}] e^{-2dumR^4/(R^3+1)} \leq c_1 e^{-c_2 mR},
$$ 
for some constants $c_1 (u), c_2 (u) >0$. We also need to estimate the probability that a killed random walk escape from a big box. If $Y$ is a geometric random variable with parameter $1/(R^3+1)$, then for all sufficiently large $R$ and for all $m$,
\begin{equation}
\label{geo tail}
P(Y > mR^{7/2}) \leq e^{-cmR^{1/2}},
\end{equation}
for some $c>0$ independent of $R$. By Azuma's inequality and the tail estimate of geometric distribution in (\ref{geo tail}), for all sufficiently large $R$ and for all $x \in A(m, R) \cap \zd$,
$$
P_x^{(R^3)} \big( H_{\hat{B} (R)} < \infty \big) \leq e^{- c_3 m  R^{1/2}}.
$$
Note that the number of vertices in $A(m, R)$ is bounded above by $c_4 m^d R^{2d}$, for some $c_4 >0$. So by union bound,
$$
P \big( G(u, R)^{\text{c}} \big) \leq \sum_{m=1}^{\infty} \bigg( c_4 m^d R^{2d} c_1 e^{-c_2 mR} + c_4 m^d R^{2d} \frac{2dumR^4}{R^3+1} e^{- c_3 m  R^{1/2}} \bigg),    
$$
for all sufficiently large $R$. Let
$$
S(R) := \sum_{m=1}^{\infty} \bigg( c_4 m^d R^{2d} c_1 e^{-c_2 mR} + c_4 m^d R^{2d} \frac{2dumR^4}{R^3+1} e^{- c_3 m  R^{1/2}} \bigg).
$$
Note that the sum $S(R)$ converges for all $R >0$, and
$$
S(R) \xrightarrow{R \rightarrow \infty} 0.
$$
Therefore,
$$
P \big( G(u, R)^{\text{c}} \big) \xrightarrow{R \rightarrow \infty} 0. 
$$ 

\end{proof}

\begin{lemma}
Let $u >0$. Consider the FRI $\mathcal{FI}^{u, R^3}$. Then
$$
\lim_{R \rightarrow \infty} P \Big( \text{Conditions } (1) \text{ and } (2) \text{ are satisfied }\Big) =1.
$$
\end{lemma}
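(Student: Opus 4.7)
\begin{proofsect}{Proof Plan}
The plan is to deduce conditions $(1)$ and $(2)$ of Definition \ref{good def} from their ``tilde'' versions $(1^{*})$ and $(2^{*})$ proved in Subsections \ref{1star} and \ref{2star}, using Lemma \ref{no outside} as the bridge. The key observation is that $\mathcal{D}_i$ differs from $\mathcal{FI}^{u/2, R^3}_i$ only through trajectories in $\mathcal{FI}^{u/2, R^3}_i$ whose starting vertex lies outside $B(0, 128R^2)$. On the event $G(u, R)$ defined in Lemma \ref{no outside}, no such trajectory intersects $\hat{B}(R)$, hence
\[
\mathcal{D}_i \cap \hat{B}(R) \;=\; \mathcal{FI}^{u/2, R^3}_i \cap \hat{B}(R), \qquad i = 1, 2.
\]
Since every $\hat{b}_{i,j}(R) \subset \hat{B}(R)$, this forces $\mathcal{C}_{i,j}(x) = \tilde{\mathcal{C}}_{i,j}(x)$ for all $x$, and consequently $E_{i,j} = \tilde{E}_{i,j}$, on $G(u, R)$. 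Likewise the $\mathcal{D}_2$-connectivity appearing in condition $(2)$ coincides with the $\mathcal{FI}^{u/2, R^3}_2$-connectivity appearing in $(2^{*})$.

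First I would write
\[
P\big(\text{Conditions } (1),(2) \text{ fail}\big) \;\le\; P\big(G(u,R)^{\mathrm{c}}\big) + P\big(\{\text{Conditions } (1^{*}),(2^{*}) \text{ fail}\} \cap G(u,R)\big),
\]
and by the equivalence above the second term is bounded by $P(\text{Conditions }(1^{*}),(2^{*})\text{ fail})$. The first term tends to $0$ by Lemma \ref{no outside}.

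Next I would control the failure probability for $(1^{*})$ and $(2^{*})$ by a union bound over $0 \leq i \leq 8R$ and $1 \leq j \leq d$. For each fixed index, translation invariance reduces to the case $(i,j) = (4R, 1)$ (or $(4R, 1), (4R+1, 1)$ for $(2^{*})$), which was handled in Subsections \ref{1star} and \ref{2star} with stretched-exponentially small failure probability, namely $\exp(-R^{1/18})$ from \eqref{condition_1} and the analogue from \eqref{connect 2 sets}. For $(2^{*})$, I further union bound over all pairs $(x,y) \in b_{i,j}(R) \times b_{i+1,j}(R)$; there are at most $|b_{i,j}(R)|^2 \le C R^{2d}$ such pairs, and the decay $\exp(-c_2 \min\{R^{0.5}, R^{2-d} \mathrm{cap}(\tilde{\mathcal{C}}_{i,j}(x))\mathrm{cap}(\tilde{\mathcal{C}}_{i+1,j}(y))\})$ in \eqref{connect 2 sets}, combined with $\mathrm{cap}(\tilde{\mathcal{C}}_{i,j}(x)), \mathrm{cap}(\tilde{\mathcal{C}}_{i+1,j}(y)) \geq R^{2(d-2)/3}$, still dominates this polynomial factor for large $R$. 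The total union-bound cost over the $O(R)$ index pairs is therefore negligible against the stretched-exponential decay.

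I do not expect any serious obstacle here, as all hard work has already been carried out in Lemma \ref{no outside}, \eqref{condition_1} and \eqref{connect 2 sets}; the only mild subtlety is ensuring the equivalence between the two sets of conditions on $G(u,R)$, which requires observing that the capacity of a set is a function only of the vertex set and not of the surrounding trajectories, so that replacing $\mathcal{D}_i$ by $\mathcal{FI}^{u/2, R^3}_i$ inside $\hat{B}(R)$ preserves the events $E_{i,j}$.
\end{proofsect}
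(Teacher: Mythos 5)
Your overall route is the same as the paper's: the paper proves this lemma precisely by combining Subsections \ref{1star} and \ref{2star} with Lemma \ref{no outside}, and your treatment of condition $(1)$ is correct and complete. On $G(u,R)$ every trajectory of $\mathcal{FI}^{u/2,R^3}_i$ meeting $\hat{B}(R)$ starts in $B(128R^2)$, so $\mathcal{D}_i\cap\hat{B}(R)=\mathcal{FI}^{u/2,R^3}_i\cap\hat{B}(R)$, hence $\mathcal{C}_{i,j}(x)=\tilde{\mathcal{C}}_{i,j}(x)$ and $E_{i,j}=\tilde{E}_{i,j}$, and condition $(1)$ coincides with $(1^{*})$ on $G(u,R)$. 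The union-bound bookkeeping (polynomially many indices and pairs against the stretched-exponential bounds \eqref{condition_1} and \eqref{connect 2 sets}) is also fine.

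There is, however, a genuine misstep in your reduction for condition $(2)$: you bound $P\big(\{(1),(2)\text{ fail}\}\cap G(u,R)\big)$ by $P\big((1^{*}),(2^{*})\text{ fail}\big)$, which requires that $(2^{*})$ together with $G(u,R)$ imply $(2)$. That implication does not hold. Since $\mathcal{D}_2\subset\mathcal{FI}^{u/2,R^3}_2$, one only has $\mathcal{C}\big(\tilde{\mathcal{C}}_{i,j}(x),\mathcal{D}_2\big)\subset\mathcal{C}\big(\tilde{\mathcal{C}}_{i,j}(x),\mathcal{FI}^{u/2,R^3}_2\big)$, i.e.\ on $G(u,R)$ condition $(2)$ implies $(2^{*})$, not conversely; $G(u,R)$ only prevents trajectories started outside $B(128R^2)$ from entering $\hat{B}(R)$, so the $\mathcal{FI}^{u/2,R^3}_2$-cluster of $\tilde{\mathcal{C}}_{i,j}(x)$ could still reach $\tilde{\mathcal{C}}_{i+1,j}(y)$ through a chain that exits $\hat{B}(R)$ and uses such outside trajectories as intermediate links, in which case $(2^{*})$ holds while the $\mathcal{D}_2$-connection required by $(2)$ may fail. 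The repair is short and already implicit in your own second step: the estimate \eqref{connect 2 sets} controls the stronger, localized event that $\tilde{\mathcal{C}}_{i,j}(x)$ and $\tilde{\mathcal{C}}_{i+1,j}(y)$ are joined inside a box of radius $CR$ by (the first $R^{2.5}$ steps of) trajectories of $\mathcal{FI}^{u/2,R^3}_2$ that hit $\tilde{\mathcal{C}}_{i,j}(x)\subset\hat{B}(R)$; on $G(u,R)$ every such trajectory starts in $B(128R^2)$ and therefore belongs to $\mathcal{D}_2$, so this localized event, intersected with $G(u,R)$ and combined with $E_{i,j}=\tilde{E}_{i,j}$, does imply condition $(2)$. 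Run your union bound over $(i,j)$ and over pairs $(x,y)$ (conditioning on $\mathcal{FI}^{u/2,R^3}_1$, legitimate by independence) on this localized event rather than on the literal event $(2^{*})$, and the argument is complete and matches the paper's.
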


\begin{proof}
The result follows by the discussions in Subsections \ref{1star} and \ref{2star}, and Lemma \ref{no outside}.
\end{proof}

\subsection{Condition $(3)$}
By translation invariance and symmetry, it suffices to show the following lemma.

\begin{lemma}
\label{cond 3}
Let $u >0$, then there are constants $c(u), C(u) >0$ such that for all sufficiently large $R >0$, we have
$$
P \Big( \exists \text{ a killed random walk starting in } A^{+}_1 (R) \text{ reach } \hat{B}^{-}_1 (R) \Big) \leq c R^{2d+1} e^{- CR^{1/2}}.
$$ 
\end{lemma}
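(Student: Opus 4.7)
\begin{proofsect}{Proof Plan}
The plan is to use a union bound over all killed random walks starting in $A^+_1(R)$, relying on two facts: the minimum $\ell^\infty$-distance from $A^+_1(R)$ to $\hat B^-_1(R)$ is at least $96R^2$ (by the very definitions of the two sets), and a geometrically killed walk of mean length $R^3$ is very unlikely to travel such a distance.

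First I would decompose by starting vertex: since each $N_x$ is Poisson with mean $2du/(R^3+1)$ and, conditional on $\{N_x\}$, the walks are independent, the probability in question is bounded by
\begin{equation*}
\sum_{x\in A^+_1(R)} E[N_x]\cdot P_x^{(R^3)}\!\left(H_{\hat B^-_1(R)}<\infty\right)
\le \frac{2du}{R^3+1}\cdot |A^+_1(R)|\cdot \sup_{x\in A^+_1(R)} P_x^{(R^3)}\!\left(H_{\hat B^-_1(R)}<\infty\right).
\end{equation*}
The cardinality estimate $|A^+_1(R)|\le C R^{2d}$ is immediate from the definition of the slab, so the whole task reduces to bounding the supremum by something of order $\exp(-C R^{1/2})$.

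For the hitting probability, I would split according to the length $Y$ of the killed walk, exactly as in the proof of Lemma \ref{no outside}. On the event $\{Y>R^{7/2}\}$, the geometric tail bound (\ref{geo tail}) already gives probability at most $\exp(-cR^{1/2})$. On the complementary event $\{Y\le R^{7/2}\}$, any trajectory from $x\in A^+_1(R)$ that reaches $\hat B^-_1(R)$ must achieve an $\ell^\infty$-displacement of at least $96R^2$ within $R^{7/2}$ steps; applying Azuma's inequality to the coordinates of the simple random walk yields
\begin{equation*}
P\!\left(\max_{0\le n\le R^{7/2}}|S_n-x|_\infty \ge 96R^2\right)\le 2d\exp\!\left(-\frac{(96R^2)^2}{2 R^{7/2}}\right)\le C\exp(-C' R^{1/2}).
\end{equation*}
Combining the two cases gives $\sup_{x}P_x^{(R^3)}(H_{\hat B^-_1(R)}<\infty)\le C\exp(-c R^{1/2})$, and plugging back into the union bound produces an estimate of order $R^{2d-3}\exp(-cR^{1/2})\le cR^{2d+1}e^{-CR^{1/2}}$, as claimed. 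There is no real obstacle here: the argument is a direct, slightly simplified version of the one already carried out in Lemma \ref{no outside}, the only new input being that the constant distance $96R^2$ between $A^+_1(R)$ and $\hat B^-_1(R)$ replaces the $m$-dependent distance appearing there.
\end{proofsect}
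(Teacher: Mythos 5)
Your argument is correct and is essentially the same as the paper's, which simply says to adapt the calculation of Lemma \ref{no outside} using the construction in Definition \ref{informal}, geometric/Poisson tail estimates, and Azuma's inequality; your distance estimate $96R^2$, the split at length $R^{7/2}$, and the resulting $e^{-cR^{1/2}}$ bound match that adaptation. The only (harmless) difference is that you bound the number of relevant walks by a first-moment/Campbell bound on $\sum_x E[N_x]$ rather than invoking a Poisson tail bound as in Lemma \ref{no outside}, which if anything streamlines the estimate and still lands comfortably under the stated $cR^{2d+1}e^{-CR^{1/2}}$.
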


\begin{proof}
One can easily adapt the calculations in the proof of Lemma \ref{no outside}. The result follows from Definition \ref{informal}, and tail estimates of geometric and Poisson distributions, and Azuma's inequality. 
\end{proof}

\section{Renormalization and proof of Theorem \ref{main1}}
\label{sec renormal}
Recall the family $\{Y_x\}_{x \in \zd}$ of $\{0,1\}$-valued random variables defined in (\ref{block coord}). In this section, we show that $\{Y_x\}$ stochastically dominates an i.i.d. supercritical site percolation when $R$ is sufficiently large and thus it has an infinite open cluster almost surely.

\begin{remark}
	Note that $\{Y_x\}_{x \in \zd}$ themselves form a finitely dependent percolation, and that the probability that each edge is open is high enough. An alternative ``block construction" approach according to Durrett and Griffeath, \cite{MR682796} can also give us the desired result. 	
\end{remark}

\begin{lemma}
	\label{dom}
For any $u > 0$ and for all $R > 0$ that is sufficiently large (depending on $u$), the random field $\{Y_x\}_{x \in \zd}$ generated by $\mathcal{FI}^{u,R^3}$ stochastically dominates an i.i.d. site percolation $\{Z_x\}_{x \in \zd}$ such that $P (Z_0 = 1) > p_c (\zd)$, where $p_c (\zd)$ is the critical probability of site percolation on $\zd$. 
\end{lemma}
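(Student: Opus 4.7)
The plan is to combine two ingredients: (i) the fact that $Y_x$ is a local functional of the FRI so that the field $\{Y_x\}_{x\in\zd}$ has only finite-range dependence uniform in $R$, and (ii) Theorem \ref{good}, which says the marginal $P(Y_0=1)$ can be made arbitrarily close to $1$ by enlarging $R$. Given these two facts, a standard stochastic-domination lemma for finitely dependent fields produces the required Bernoulli minorant.

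First I would verify the finite-range dependence. By Definition \ref{good def} and Remark \ref{independence}, whether $\hat B(R)$ is good is completely determined by the collection of geometrically killed random walks in $\mathcal{FI}^{u,R^3}$ whose starting points lie in $B(0,128R^2)$. After translation, $Y_x$ is determined by the walks starting in $B(32R^2 x,128R^2)$. Two such $\ell^\infty$-balls $B(32R^2 x,128R^2)$ and $B(32R^2 y,128R^2)$ are disjoint as soon as $32R^2\|x-y\|_\infty>256R^2$, i.e. $\|x-y\|_\infty\ge 9$, and disjoint starting-site collections produce independent families of killed random walks under the construction of Definition \ref{informal}. Hence $\{Y_x\}_{x\in\zd}$ is an $8$-dependent field in the $\ell^\infty$-sense, with the dependence range \emph{independent} of $R$.

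Next, by Theorem \ref{good}, for any $p\in(p_c(\zd),1)$ there exists $R_0=R_0(u,p)$ such that for all $R\ge R_0$ we have $P(Y_0=1)\ge q$ for any prescribed $q<1$. Choose $q$ close enough to $1$ that the Liggett--Schonmann--Stacey stochastic-domination theorem (Theorem 1.3 of their 1997 Annals paper) applies to the $8$-dependent field $\{Y_x\}$: for every $p<1$ there is a threshold $q^\ast(p,8,d)<1$ such that any $8$-dependent field on $\zd$ with marginals bounded below by $q^\ast$ stochastically dominates an i.i.d. Bernoulli site percolation with parameter $p$. Pick $p\in(p_c(\zd),1)$, then pick $q>q^\ast(p,8,d)$, then choose $R$ large enough that $P(Y_0=1)\ge q$. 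This yields the desired domination.

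The only potentially delicate step is the finite-range dependence claim, because one must be careful that the construction in Definition \ref{informal} really does make the collections of walks starting in disjoint sets independent; this is immediate from the Poisson decomposition $\mathcal{FI}^{u,T}=\bigsqcup_x \{\text{walks started at }x\}$ together with independence across distinct starting points. Everything else is an invocation of off-the-shelf results: Theorem \ref{good} for the marginal and Liggett--Schonmann--Stacey for the domination. In particular, since the dependence range $8$ does not grow with $R$, the threshold $q^\ast(p,8,d)$ is a fixed constant less than $1$, so the required marginal bound is achievable by Theorem \ref{good} for all sufficiently large $R$.
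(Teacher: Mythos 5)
Your proposal is correct and follows essentially the same route as the paper: finite-range dependence of $\{Y_x\}$ via Remark \ref{independence} (the paper states $9$-dependence, matching your computation up to the convention for the dependence range), combined with Theorem \ref{good} and the Liggett--Schonmann--Stacey domination theorem, noting that the dependence range is uniform in $R$ so the required marginal threshold is a fixed constant. No gaps.
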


\begin{proof}
By the definition of good boxes in Section \ref{sec good def} and Remark \ref{independence}, the random field $\{Y_x\}_{x \in \zd}$ is $9$-dependent. The stochastic domination over an i.i.d supercritical site percolation follows from the domination by product measures result by Liggett, Schonmann, and Stacey \cite{Liggett1997Domination} (or Theorem $7.65$ in \cite{MR1707339}) and Theorem \ref{good}.
\end{proof}

\begin{corollary}
	\label{inf cluster}
For any $u > 0$ and for all $R > 0$ that is sufficiently large (depending on $u$), $\mathcal{FI}^{u,R^3}$ has an infinite cluster  almost surely.
\end{corollary}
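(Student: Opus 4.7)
The plan is to deduce the existence of an infinite cluster in $\mathcal{FI}^{u,R^3}$ directly from Lemma \ref{dom} combined with Remark \ref{neighbor connect}. The argument will be a short two-step reduction: first pass from percolation in the $\{Y_x\}$-field to percolation in $\mathcal{FI}^{u,R^3}$ via the ``good-box-chaining" property, then invoke standard supercritical Bernoulli percolation to produce an infinite chain of good boxes.

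More concretely, I would first fix $R$ large enough (depending on $u$) so that Lemma \ref{dom} applies, giving an i.i.d. site percolation $\{Z_x\}_{x\in\BZ^d}$ with $P(Z_0=1)>p_c(\BZ^d)$ that is stochastically dominated by $\{Y_x\}$. By classical Bernoulli site percolation theory, $\{Z_x=1\}$ has an infinite connected component in $\BZ^d$ almost surely, hence so does $\{Y_x=1\}$. In particular there exists almost surely an infinite nearest-neighbor path $x_0,x_1,x_2,\dots$ in $\BZ^d$ with $Y_{x_n}=1$ for every $n$.

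Next, I would use Remark \ref{neighbor connect}: whenever $x$ and $y$ are nearest neighbors in $\BZ^d$ and both $\hat{B}_x(R)$ and $\hat{B}_y(R)$ are good, the ``good" conditions (1) and (2) ensure that $\hat{B}_x(R)\cap\hat{B}_y(R)$ contains, with respect to the FRI restricted to those trajectories starting in $B(x,128R^2)\cap B(y,128R^2)$, a single large connected component crossing both boxes, while condition (3) rules out any outside killed random walks from breaking the independence used to chain successive boxes. Therefore along the infinite path $x_0,x_1,\dots$ the ``large crossing component" inside $\hat{B}_{x_n}(R)$ merges with the one inside $\hat{B}_{x_{n+1}}(R)$ for every $n$, yielding an infinite connected subset of $\mathcal{FI}^{u,R^3}$.

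The only subtle point, and what I would write most carefully, is that the three good-box conditions were designed precisely so that the connections in $\hat{B}_{x_n}(R)$ for distinct $n$ are built from disjoint pieces of the Poisson point process (trajectories started in disjoint regions and constrained not to exit their own ``slab" by condition (3)), so the chaining does not produce any logical circularity with the stochastic domination. Apart from that bookkeeping, no estimates remain to be done: the heavy lifting has already been carried out in Theorem \ref{good} and Lemma \ref{dom}.
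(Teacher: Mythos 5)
Your proposal is correct and follows essentially the same route as the paper: the paper's proof likewise takes the $R$ from Lemma \ref{dom}, notes that the dominated supercritical i.i.d. site percolation gives an infinite cluster of good boxes in $\{Y_x\}$, and then invokes Remark \ref{neighbor connect} to chain the crossing components of neighboring good boxes into an infinite cluster of $\mathcal{FI}^{u,R^3}$. Your additional bookkeeping about the local generation of connections (condition (3)) is exactly the content of Remarks \ref{independence} and \ref{neighbor connect}, so nothing is missing.
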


\begin{proof}
We can choose the same $R$ as in Lemma \ref{dom}. By the definition of good boxes and Remark \ref{neighbor connect}, $\mathcal{FI}^{u,R^3}$ has an infinite cluster if $\{Y_x\}_{x \in \zd}$ has one.
\end{proof}

Now back to the proof of Theorem \ref{main1}, for any $u>0$ and sufficiently large $T$, one may let $R=\lfloor T^{1/3}\rfloor$ and the proof is complete. \qed

\section{Uniqueness of Infinite Cluster}
\label{sec unique}

We have shown that the FRI $\mathcal{FI}^{u,R^3}$ has an infinite cluster almost surely if $R > R_0 (u)$, for some $R_0 (u)> 0$. In this section, we show that the infinite cluster of $\mathcal{FI}^{u,R^3}$ is unique almost surely. Let $x \in \zd$, we define the canonical lattice shift
$$
T_x: \{0,1\}^{\zd} \rightarrow \{0,1\}^{\zd}
$$
by $\big( T_x (\xi) \big) (y) = \xi(y+x)$, for any $\xi \in \{0,1\}^{\zd}$ and $y \in \zd$. We will first show that FRI is ergodic with respect to lattice shifts.

\begin{lemma}
	\label{measure preserving}
	Let $P^{u,T}$ be the probability law for $\mathcal{FI}^{u,T}$ defined in Section \ref{sec fri def}. For any $x \in \zd$ and any $u,T>0$, the map $T_x$ preserves $P^{u,T}$.
\end{lemma}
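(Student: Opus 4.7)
The plan is to verify the lemma by comparing the two measures $P^{u,T}$ and $P^{u,T} \circ T_x^{-1}$ on a generating $\pi$-system and then appealing to Dynkin's $\pi$-$\lambda$ lemma. Recall from Section \ref{sec fri def} that $P^{u,T}$ is characterized on the canonical product $\sigma$-algebra of $\{0,1\}^{\zd}$ by the formula
$$
P^{u,T}\big(\{\xi: \xi(y) = 0, \ \forall y \in K\}\big) = \exp\Big(-2du\sum_{y\in K}P_y^{(T)}(\tilde H_K=\infty)\Big)
$$
for every finite $K\subset\zd$. Since the collection $\CF$ of all such cylinder events is closed under finite intersections (note $\{\xi(y)=0,\forall y\in K_1\}\cap\{\xi(y)=0,\forall y\in K_2\}=\{\xi(y)=0,\forall y\in K_1\cup K_2\}$) and generates the product $\sigma$-algebra, it suffices to show that $P^{u,T}$ and $P^{u,T}\circ T_x^{-1}$ assign the same value to every event in $\CF$.

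The computation is straightforward. For any finite $K\subset\zd$ and any $x\in\zd$,
$$
P^{u,T}\circ T_x^{-1}\big(\{\xi: \xi(y)=0,\ \forall y\in K\}\big)
= P^{u,T}\big(\{\xi: \xi(y+x)=0,\ \forall y\in K\}\big)
= P^{u,T}\big(\{\xi: \xi(z)=0,\ \forall z\in K+x\}\big).
$$
Applying the defining formula to the finite set $K+x$ gives
$$
P^{u,T}\circ T_x^{-1}\big(\{\xi: \xi(y)=0,\ \forall y\in K\}\big)
= \exp\Big(-2du\sum_{z\in K+x}P_z^{(T)}(\tilde H_{K+x}=\infty)\Big).
$$

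The final step is to use translation invariance of the simple random walk. For every $z\in K+x$, writing $z=y+x$ with $y\in K$, we have $P_z^{(T)}(\tilde H_{K+x}=\infty) = P_{y+x}^{(T)}(\tilde H_{K+x}=\infty) = P_y^{(T)}(\tilde H_K=\infty)$, since the law of a geometrically killed simple random walk started at $y+x$ is the translate by $x$ of the law started at $y$, and $\tilde H_{K+x}$ for the former trajectory equals $\tilde H_K$ for the latter. Reindexing the sum gives
$$
\sum_{z\in K+x}P_z^{(T)}(\tilde H_{K+x}=\infty) = \sum_{y\in K}P_y^{(T)}(\tilde H_K=\infty),
$$
so the two exponentials coincide and $P^{u,T}$ and $P^{u,T}\circ T_x^{-1}$ agree on $\CF$. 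By Dynkin's $\pi$-$\lambda$ lemma they coincide on the entire canonical product $\sigma$-algebra, which is the assertion that $T_x$ preserves $P^{u,T}$. There is really no serious obstacle here; the only thing to be careful about is the bookkeeping of the translation in the index set versus the starting point of the random walk, and the invocation of the uniqueness statement from the $\pi$-$\lambda$ lemma. Alternatively one could argue directly from Definition \ref{def_FRI}, noting that $v^{(T)}=\sum_{x\in\zd}\frac{2d}{T+1}P_x^{(T)}$ is invariant under the $\zd$-action on $W^{[0,\infty)}$ (because summing over all starting points together with translation invariance of the SRW transition kernel yields a shift-invariant intensity), hence the underlying PPP and the induced configuration are shift-invariant as well.
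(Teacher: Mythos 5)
Your proof is correct and follows essentially the same route as the paper: verify agreement of $P^{u,T}$ and its shift on the $\pi$-system of events $\{\xi(y)=0,\ \forall y\in K\}$ via translation invariance of the geometrically killed walk (equivalently, of $\text{cap}^{(T)}$), then invoke Dynkin's $\pi$-$\lambda$ lemma. You simply spell out the bookkeeping that the paper compresses into the identity $\text{cap}^{(T)}(K-x)=\text{cap}^{(T)}(K)$.
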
 

\begin{proof}
	Fix $x \in \zd$. By Dynkin's $\pi$-$\lambda$ Lemma, it suffices to show that for any finite subset $K \subset \zd$,
	$$
	P \big( \mathcal{FI}^{u,T} \cap (K-x) = \emptyset \big) = P \big( \mathcal{FI}^{u,T} \cap K = \emptyset \big) = e^{-u \cdot \text{cap}^{(T)}(K)}.
	$$
	Note that
	$$
	P \big( \mathcal{FI}^{u,T} \cap (K-x) = \emptyset \big) = e^{-u \cdot \text{cap}^{(T)}(K-x)} = e^{-u \cdot \text{cap}^{(T)}(K)}.
	$$
	The proof is complete.
\end{proof}

Let $x \in \zd$, define the evaluation map
$$
\Phi_x : \{0,1\}^{\zd} \rightarrow \{0,1\}
$$
by $\Phi_x (\xi) = \xi (x)$. We write $\sigma (\cdot)$ for the product $\sigma$-algebra generated by a set or the $\sigma$-algebra generated by a set of functions. The following lemma is a classical approximation result.

\begin{lemma}
	\label{approx}
	Let $\big( \{0,1\}^{\zd}, \sigma (\{0,1\}^{\zd}), Q \big)$ be a probability space, and let $B \in \sigma (\{0,1\}^{\zd})$, then for any $\ep > 0$, there is a finite subset $K \subset \zd$ and $B_{\ep} \in \sigma (\Phi_x : x \in K)$ such that
	$$
	Q \big( B \triangle B_{\ep} \big) \leq \ep.
	$$
\end{lemma}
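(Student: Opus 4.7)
The plan is to invoke the standard cylinder-approximation theorem for measures on product spaces. Let $\mathcal{A}$ denote the algebra of \emph{cylinder events}, namely sets of the form
\[
C = \{\xi \in \{0,1\}^{\zd} : (\xi(x_1),\ldots,\xi(x_n)) \in S\}
\]
for some finite $\{x_1,\ldots,x_n\} \subset \zd$ and some $S \subset \{0,1\}^n$. Each such $C$ manifestly lies in $\sigma(\Phi_x : x \in K)$ with $K = \{x_1,\ldots,x_n\}$, and it is routine to check that $\mathcal{A}$ is an algebra whose generated $\sigma$-algebra is exactly $\sigma(\{0,1\}^{\zd})$, since the product $\sigma$-algebra is by definition generated by the coordinate maps $\Phi_x$.

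The main step is to verify that every set in $\sigma(\mathcal{A})$ can be approximated in symmetric difference, to arbitrary precision, by an element of $\mathcal{A}$. Concretely, I would define
\[
\mathcal{D} := \{B \in \sigma(\mathcal{A}) : \forall\, \delta > 0, \exists\, A \in \mathcal{A} \text{ with } Q(B \triangle A) \leq \delta\},
\]
and show that $\mathcal{D}$ is a $\sigma$-algebra containing $\mathcal{A}$. Containment is immediate, and closure under complementation follows from the identity $B^c \triangle A^c = B \triangle A$. For closure under countable unions, given $B = \bigcup_{n \geq 1} B_n$ with $B_n \in \mathcal{D}$, continuity from below of $Q$ gives $N$ with $Q(B \setminus \bigcup_{n \leq N} B_n) \leq \delta/2$; choosing $A_n \in \mathcal{A}$ with $Q(B_n \triangle A_n) \leq \delta/(2N)$ and setting $A = \bigcup_{n\leq N} A_n \in \mathcal{A}$ yields $Q(B \triangle A) \leq \delta$. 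Hence $\mathcal{D} = \sigma(\mathcal{A}) = \sigma(\{0,1\}^{\zd})$.

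Given the hypotheses of the lemma, apply this to $B$ with $\delta = \epsilon$ to obtain some $A \in \mathcal{A}$ with $Q(B \triangle A) \leq \epsilon$. Since $A \in \mathcal{A}$, there is a finite $K \subset \zd$ with $A \in \sigma(\Phi_x : x \in K)$; taking $B_\epsilon := A$ completes the argument. The statement is a classical consequence of Carath\'eodory/monotone-class reasoning and there is no genuine obstacle; the only mildly delicate point is the truncation used to handle countable unions, but this is handled by the standard $\delta/2^n$ trick described above.
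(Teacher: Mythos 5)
Your proof is correct: the good-sets argument (cylinder algebra, then showing the class of sets approximable in symmetric difference is a $\sigma$-algebra, with the truncation-plus-$\delta/(2N)$ step handling countable unions) is the standard proof of this fact, and all steps check out. The paper itself offers no proof, simply citing the lemma as a classical approximation result, so your write-up supplies exactly the argument the paper leaves implicit.
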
 

We need one more auxiliary lemma.

\begin{lemma}
	\label{cap cal}
	Let $K \subset \zd$ be a finite subset, and $K_1 \subset K$, and $K_0 = K \setminus K_1$. Then for all $u,T>0$,
	$$
	P \big( \mathcal{FI}^{u,T} \cap K = K_1 \big) = \sum_{ K' \subset K_1 } (-1)^{|K'|} e^{-u \cdot \text{cap}^{(T)} (K' \cup K_0)}.
	$$
\end{lemma}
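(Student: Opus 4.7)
\begin{proofsect}{Proof plan for Lemma \ref{cap cal}}
The plan is a direct inclusion–exclusion on top of the ``empty set'' identity that already appears in the definition of $P^{u,T}$. Specifically, for any finite $A \subset \zd$ we have
$$
P\bigl( \mathcal{FI}^{u,T} \cap A = \emptyset \bigr) \;=\; e^{-u \cdot \text{cap}^{(T)}(A)},
$$
which is exactly the content of the definition of $P^{u,T}$ given in Section~\ref{sec fri def} (alternatively it follows from Proposition~\ref{local} with $L = A$ and $K = \emptyset$, or from Corollary~\ref{local2}(1)).

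The key observation is the set-theoretic decomposition
$$
\{\mathcal{FI}^{u,T} \cap K = K_1\} \;=\; \{\mathcal{FI}^{u,T} \cap K_0 = \emptyset\} \;\cap\; \bigcap_{x \in K_1} \{x \in \mathcal{FI}^{u,T}\}.
$$
Setting $B = \{\mathcal{FI}^{u,T} \cap K_0 = \emptyset\}$ and $A_x = \{x \in \mathcal{FI}^{u,T}\}$, I would apply inclusion–exclusion to rewrite
$$
\mathbf{1}_B \cdot \prod_{x \in K_1} \mathbf{1}_{A_x} \;=\; \mathbf{1}_B \cdot \prod_{x \in K_1} \bigl( 1 - \mathbf{1}_{A_x^c} \bigr) \;=\; \sum_{K' \subset K_1} (-1)^{|K'|}\, \mathbf{1}_B \cdot \prod_{x \in K'} \mathbf{1}_{A_x^c}.
$$
Since $B \cap \bigcap_{x \in K'} A_x^c = \{\mathcal{FI}^{u,T} \cap (K_0 \cup K') = \emptyset\}$, taking expectations and applying the displayed identity above to each set $K_0 \cup K'$ gives
$$
P\bigl(\mathcal{FI}^{u,T} \cap K = K_1 \bigr) \;=\; \sum_{K' \subset K_1} (-1)^{|K'|} e^{-u \cdot \text{cap}^{(T)}(K_0 \cup K')},
$$
which is exactly the claim.

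There is essentially no obstacle here: the only thing that has to be checked carefully is that the ``empty set'' probability formula is valid for \emph{every} finite subset of $\zd$ (not just for the particular $K$ in the statement), which is immediate from its definition. The rest is the standard finite inclusion–exclusion identity applied pointwise and then integrated, so no approximation or measurability subtlety arises.
\end{proofsect}
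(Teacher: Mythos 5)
Your proof is correct and is exactly the argument the paper has in mind: the paper's own proof is a one-line appeal to the inclusion--exclusion formula (citing the analogous identity for RI), and your expansion of $\prod_{x\in K_1}(1-\mathbf{1}_{A_x^c})$ combined with the empty-set formula $P(\mathcal{FI}^{u,T}\cap A=\emptyset)=e^{-u\,\text{cap}^{(T)}(A)}$ is precisely that computation spelled out.
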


\begin{proof}
	This follows from inclusion-exclusion formula (see Equation $2.1.3$ of \cite{drewitz2014introduction} for a similar result in RI).
\end{proof}

\begin{proposition}
	\label{ergodic}
	For any $u,T>0$ and any $0 \neq x \in \zd$, the measure preserving map $T_x$ is ergodic with respect to $P^{u,T}$.
\end{proposition}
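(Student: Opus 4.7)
The plan is to establish the stronger property that $T_x$ is \emph{mixing} with respect to $P^{u,T}$, which immediately implies ergodicity. Concretely, I would show that for all measurable $A,B\in\sigma(\{0,1\}^{\zd})$,
$$
P^{u,T}(A\cap T_{nx}^{-1}B)\;\xrightarrow{\,n\to\infty\,}\;P^{u,T}(A)\,P^{u,T}(B).
$$
Since $x\neq 0$, we have $\|nx\|_1\to\infty$, so after a cylinder-event approximation the supports of the two events will be arbitrarily far apart.

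\emph{Step 1 (reduction to cylinder events).} Given $\varepsilon>0$, use Lemma \ref{approx} to pick finite $K\subset\zd$ and $A_\varepsilon,B_\varepsilon\in\sigma(\Phi_y:y\in K)$ with $P^{u,T}(A\triangle A_\varepsilon),P^{u,T}(B\triangle B_\varepsilon)\leq \varepsilon$. Since $T_{nx}$ is measure-preserving (Lemma \ref{measure preserving}), $T_{nx}^{-1}B_\varepsilon\in\sigma(\Phi_y:y\in K-nx)$ and $P^{u,T}(T_{nx}^{-1}B\triangle T_{nx}^{-1}B_\varepsilon)\leq\varepsilon$. So it suffices to prove the mixing statement for cylinder events based on finite sets $K_A$ and $K_B^{(n)}:=K_B-nx$ whose mutual distance tends to $\infty$.

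\emph{Step 2 (reduction to capacity identity).} By linearity and Lemma \ref{cap cal}, any cylinder event is a finite linear combination of events of the form $\{\mathcal{FI}^{u,T}\cap S=\emptyset\}$, whose probabilities equal $e^{-u\cdot\text{cap}^{(T)}(S)}$ by Lemma \ref{measure preserving} and the definition of $P^{u,T}$. Hence it suffices to prove that, whenever $S_A\subset\zd$ is finite and $S_B^{(n)}\subset\zd$ is a finite set with $\text{dist}(S_A,S_B^{(n)})\to\infty$,
\begin{equation*}
\exp\!\bigl(-u\cdot\text{cap}^{(T)}(S_A\cup S_B^{(n)})\bigr)\;-\;\exp\!\bigl(-u\cdot\text{cap}^{(T)}(S_A)\bigr)\exp\!\bigl(-u\cdot\text{cap}^{(T)}(S_B^{(n)})\bigr)\;\longrightarrow\;0.
\end{equation*}

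\emph{Step 3 (asymptotic additivity of killed capacity).} Everything reduces to showing $\text{cap}^{(T)}(S_A\cup S_B^{(n)})=\text{cap}^{(T)}(S_A)+\text{cap}^{(T)}(S_B^{(n)})+o(1)$ as $n\to\infty$. For $y\in S_A$, disjointness of $S_A,S_B^{(n)}$ gives
$$
P_y^{(T)}(\tilde H_{S_A}=\infty)-P_y^{(T)}(\tilde H_{S_A\cup S_B^{(n)}}=\infty)\;\leq\;P_y^{(T)}(H_{S_B^{(n)}}<\infty).
$$
For a geometrically killed walk with parameter $1/(T+1)$, the total length $N$ satisfies $P(N\geq d)=(T/(T+1))^d$, and the walk can reach distance $d$ only after at least $d$ steps, so
$$
P_y^{(T)}(H_{S_B^{(n)}}<\infty)\;\leq\;\Bigl(\tfrac{T}{T+1}\Bigr)^{\text{dist}(y,S_B^{(n)})}\!\!\xrightarrow{\,n\to\infty\,}0.
$$
Summing over the finitely many $y\in S_A$ and applying the symmetric argument to $S_B^{(n)}$ yields the desired additivity up to $o(1)$, and (by continuity of $\exp$) the identity in Step 2.

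\emph{Main obstacle.} The only nontrivial input is Step 3, the asymptotic additivity of the killed capacity for far-apart finite sets. It relies crucially on the finiteness of $T$: a geometrically killed walk has exponential tails on its length, so distant sets are effectively decoupled. This is precisely where the argument uses the distinction between FRI and ordinary RI (for RI this mixing via capacity is more delicate because capacity is not additive in the same strong sense). Everything else (the approximation by cylinder events, the inclusion-exclusion, the passage from mixing to ergodicity) is standard machinery.
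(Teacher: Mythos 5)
Your proof is correct, but it takes a more explicit route than the paper, which simply says to adapt Sznitman's ergodicity argument for ordinary random interlacements (Theorem 2.1 of the RI paper), i.e.\ a mixing argument carried out at the level of the Poisson point process of trajectories, where decorrelation comes from the vanishing intensity of trajectories meeting two far-apart boxes. You instead work entirely with the $\{0,1\}^{\zd}$ law $P^{u,T}$: cylinder approximation, inclusion--exclusion (Lemma \ref{cap cal}), and then mixing reduced to asymptotic additivity of the killed capacity, $\text{cap}^{(T)}(S_A\cup S_B^{(n)})=\text{cap}^{(T)}(S_A)+\text{cap}^{(T)}(S_B^{(n)})+o(1)$, which follows from the exponential tail $\bigl(\tfrac{T}{T+1}\bigr)^{\mathrm{dist}}$ of the geometric killing. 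This is a sound and essentially self-contained argument (note in Step 2 that the number of inclusion--exclusion terms is bounded uniformly in $n$, so the $o(1)$ errors in the exponents do propagate to an $o(1)$ error in probabilities; worth saying explicitly). What your route buys is that it avoids the trajectory-space formalism altogether and makes the decoupling quantitative and elementary for FRI. One small caveat: your closing remark that this step marks a genuine distinction from RI is overstated --- for RI the same asymptotic additivity of (ordinary) capacity holds by transience, with polynomial rather than exponential decay of the hitting probability of the far set, so the capacity-based mixing argument would also go through there; the FRI setting just makes it trivial. This inaccuracy is an aside and does not affect the validity of your proof.
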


\begin{proof}
	One can easily adapt the proof of ergodicity for random interlacements, e.g. see Theorem $2.1$ of \cite{Sznitman2009Vacant}. 

\end{proof}

\begin{theorem}
	\label{unique}
	For any $u>0$ and for all sufficiently large $R>0$ (depending on $u$), $\mathcal{FI}^{u,R^3}$ has a unique infinite open cluster almost surely.
\end{theorem}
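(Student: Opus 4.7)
The plan is to adapt the Burton--Keane scheme to the FRI setting. By Proposition \ref{ergodic}, the number $N$ of infinite connected components of $\mathcal{FI}^{u,R^3}$ is $T_x$-invariant for every $x\in\zd$ and therefore almost surely constant; combined with Corollary \ref{inf cluster}, this gives $N\geq 1$ a.s.\ whenever $R$ is sufficiently large. It thus suffices to rule out $N\geq 2$.

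The central tool is an insertion/deletion tolerance property that stems from the PPP description of $\mathcal{FI}^{u,R^3}$. For any finite $K\subset\zd$, the thinning property decomposes $\mu$ into independent PPPs $\mu=\mu_K+\mu_{\neg K}$ of trajectories hitting, respectively avoiding, $K$; the distribution of $\mu_K$ is the one given in Lemma \ref{finite measure 2}. Since edges with both endpoints in $K$ can only be opened by trajectories in $\mu_K$, any nontrivial local edge event inside $K$ has positive conditional probability given $\mu_{\neg K}$, with a lower bound uniform in the outer configuration. Moreover, by an estimate analogous to Lemma \ref{no outside}, with positive probability no trajectory starting outside a slightly larger box $K'\supset K$ ever visits $K$, so modifications of $\mu_K$ can be localized inside $K'$.

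Equipped with this tolerance, I would carry out the two-step Burton--Keane argument. To exclude $2\leq N<\infty$: on the positive-probability event that two distinct infinite clusters meet $B(M)$ for some large $M$, insertion tolerance lets us arrange, with positive conditional probability, a single extra trajectory in $\mu_K$ visiting one vertex in each of these clusters, thereby strictly reducing $N$ and contradicting its a.s.\ constancy. To exclude $N=\infty$: define a trifurcation at $x$ as a vertex whose removal splits its infinite cluster into at least three infinite pieces; finite energy yields a uniform positive lower bound $\rho$ on the probability that a given vertex is a trifurcation, while the classical combinatorial argument of Burton--Keane bounds the number of trifurcations in $B(M)$ deterministically by $c|\partial B(M)|$. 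Translation invariance (Lemma \ref{measure preserving}) and the ergodic theorem then give $\rho|B(M)|\leq c|\partial B(M)|$ in the limit, which fails for large $M$.

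The main obstacle is making the local modifications fully rigorous: FRI trajectories are finite but can extend arbitrarily far, so ``conditioning on the outside'' must happen at the PPP level $\mu=\mu_K+\mu_{\neg K}$ rather than naively on the edge configuration outside $B(M)$. Verifying that the inserted trajectory really merges exactly the two intended clusters, without creating spurious new infinite components or altering the far-away structure, requires taking $K$ substantially larger than the modification region and combining the PPP independence with the geometric-killing tail estimates underlying Lemma \ref{no outside}.
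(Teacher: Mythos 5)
Your proposal follows essentially the same route as the paper: ergodicity makes the number $N$ of infinite clusters a.s.\ constant, a Poissonian local-modification (insertion) argument rules out $2\le N<\infty$, and trifurcations combined with the Burton--Keane boundary count rule out $N=\infty$, with Lemma \ref{no outside} used to localize the surgery. The only difference is one of implementation: the paper performs the modification by resampling the killed walks according to their starting points (Definition \ref{informal}) and fixing one of the countably many finite inside-configurations, rather than thinning into trajectories hitting/avoiding $K$, and this is precisely how it resolves the conditioning/measurability issue you identify as your main obstacle.
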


\begin{proof}
	We adapt the proof of uniqueness in percolation model by Burton and Keane \cite{Burton1989Density} (see Theorem $8.1$ in \cite{MR1707339} and Theorem $12.2$ in \cite{haggstrom2006uniqueness}). Fix $u >0$. Let $N$ be the number of infinite open clusters in $\mathcal{FI}^{u,R^3}$. Since $N$ is translation-invariant, $N$ is constant almost surely by Proposition \ref{ergodic}. By Corollary \ref{inf cluster}, there is a $R_0 (u) >0$ such that for all $R > R_0$, $\mathcal{FI}^{u,R^3}$ has an infinite open cluster almost surely. We fix $R > R_0$, so $P(N=0) = 0$. Suppose $P(N=k) = 1$ for $2 \leq k < \infty$. Let $M_{B(n)}$ be the number of infinite open clusters in $\mathcal{FI}^{u, R^3}$ intersecting $B(n)$. Noting that
	$$
	P \big( M_{B(n)} \geq 2 \big) \xrightarrow{n \rightarrow \infty} P(N \geq 2) =1,
	$$
	there has to be a $n$ such that 
	$$
	P \big( M_{B(n)} \geq 2 \big)>0. 
	$$
	Recall Definition \ref{informal}. Let $F_{1,0}$ be the subgraph in $\zd$ generated by paths starting from $B(n-1)$, $F_{1,1}$ be the subgraph in $\zd$ generated by paths starting from $\partial^{in}B(n)$, and $F_1=F_{1,0}\cup F_{1,1}$. Moreover, let $F_0$ be the subgraph in $\zd$ generated by paths starting from $B^c(n)$. 
	
	Note that $F_{1,0}$ and $F_{1,1}$ may only have countable many configurations, there has to be a pair of (finite) configurations $\CF_{1,0}$ and $\CF_{1,1}$, and a $j\ge 2$ such that 
	$$
	P \big( M_{B(n)}=j, \ F_{1,0}=\CF_{1,0}, \ F_{1,1}=\CF_{1,1} \big)>0,
	$$
	which implies that 
	$$
	P \big(F_0\cup \CF_{1,0}\cup \CF_{1,1} \text{ has $k$ infinite components, among which $j$ components intersect $B(n)$} \big)>0. 
	$$
	We denote the last event by $A_0$ and note that $A_0$ is measurable with respect to $F_0$ and thus independent to $F_{1,0}$ and $F_{1,1}$.
	
	Now let $\hat \CF_{1,1}=\CF_{1,0}\cup \CF_{1,1}\setminus B(n-1)$, and let 
	$$
	\hat \CF_{1,0}=\left\{x\pm \text{e}_j, \ x\in B(n-1), \ j=1,2,\cdots,d \right\}
	$$ 
	be the collection of all edges starting from $B(n-1)$ (or all the edges within $B(n)$). One can immediately see that 
	$$
	P\big(A_0, F_{1,0}=\hat \CF_{1,0}, F_{1,1}=\hat \CF_{1,1}\big)=P\big(A_0\big) P\big(F_{1,0}=\hat \CF_{1,0}, F_{1,1}=\hat \CF_{1,1}\big)>0. 
	$$
	However, given the event above, note that 
	$$
	F_0\cup F_1=F_0\cup \CF_{1,0}\cup \CF_{1,1}\cup \hat \CF_{1,0}. 
	$$
	Since $\hat{\CF}_{1,0}$ contains all the edges within $B(n)$, all the $j$ components in $F_0\cup \CF_{1,0}\cup \CF_{1,1}$ intersecting $B(n)$ merge to one, and the FRI with positive probability only has $k-j+1$ infinite components. This contradicts with $P(N=k)=1$.

	Now suppose $P \big( N = \infty \big) =1$. We say a point $x \in \zd$ is a trifurcation if:
	\begin{enumerate}
		\item $x$ is in an infinite open cluster of $\mathcal{FI}^{u,R^3}$;
		\item there exist exactly three open edges incident to $x$;
		\item removing the three open edges incident to $x$ will split this infinite open cluster of $x$ into exactly three disjoint infinite open clusters. 
	\end{enumerate}
	Define the event $A_x := \{ x \text{ is a trifurcation} \}$. By translation invariance, $P(A_x)$ is constant for all $x \in \zd$. Therefore,
	$$
	\frac{1}{|B(n)|} E \Bigg[ \sum_{x \in B(n)} \ind_{A_x} \Bigg] = P (A_0).
	$$
	Recall that $M_{B(n)}$ is the number of infinite open clusters in $\mathcal{FI}^{u, R^3}$ intersecting $B(n)$. Note that
	$$
	P \big( M_{B(n)} \geq 3 \big) \xrightarrow{n \rightarrow \infty} P(N \geq 3) =1.
	$$
	Define the event
	$$
	E_n := \Big\{ \text{No killed random walks starting in } \zd \setminus B(2n) \text{ intersects } B(n) \Big\}.
	$$
	By Lemma \ref{no outside}, the probability of event $E_n^{c}$ decays stretch exponentially. We can choose $n$ large enough such that 
	$$
	P \big( M_{B(n)} \geq 3, E_n \big) > 1/2.
	$$
	Similarly, let $F_1$ and $F_2$ be the random subgraphs in $\zd$ generated by the trace of all killed random walks starting in $B(n)$ and $B(2n) \setminus B(n)$, respectively. Note that $F_1$ and $F_2$ are independent. Since there are only countably many choices for $F_1$ and $F_2$, there exist two finite subgraphs $\mathcal{F}_1$ and $\mathcal{F}_2$ in $\zd$ such that
	$$
	P \big( M_{B(n)} \geq 3, E_n , F_1 = \mathcal{F}_1, F_2 = \mathcal{F}_2 \big) > 0.
	$$
	If $\omega \in \{ M_{B(n)} \geq 3, E_n , F_1 = \mathcal{F}_1, F_2 = \mathcal{F}_2 \}$, then there exist $x(\omega), y(\omega), z(\omega) \in \partial^{in} B(n)$ lying in three distinct infinite open clusters in $\zd \setminus B(n)$. There are three paths connecting the origin and $x,y,z$, respectively, in the following way:
	\begin{enumerate}
		\item $0$ is the unique common vertex in any two paths;
		\item each path touches exactly one vertex in $\partial^{in} B(n)$.
	\end{enumerate}
	Let $D_{x,y,z,n}$ be the event that:
	\begin{enumerate}
		\item there are exactly three killed random walks starting at the origin;
		\item these three killed random walk paths end at $x,y,z$, respectively, and they satisfy the conditions above;
		\item  no killed random walks start at any vertices in $B(n) \setminus \{0\}$. 
	\end{enumerate}
	It is easy to see that $P(D_{x,y,z,n}) >0$ for all $n>0$ and all distinct $x,y,z \in \partial^{in} B(n)$. Since $\mathcal{F}_1$ and $\mathcal{F}_2$ are fixed and finite,
	$$
	P\Big( F_2 = \mathcal{F}_1 \cup \mathcal{F}_2 \setminus B(n) \Big) >0.
	$$
	For $\omega \in \{ M_{B(n)} \geq 3, E_n , F_1 = \mathcal{F}_1, F_2 = \mathcal{F}_2 \}$, we can resample all $N_x$ for $x \in B(2n)$, and then we resample all killed random walk paths starting in $B(2n)$ accordingly. Note that the resulting graph is still distributed as FRI $\mathcal{FI}^{u,R^3}$. If the events $D_{x,y,z,n}$ and $\{F_2 = \mathcal{F}_1 \cup \mathcal{F}_2 \setminus B(n)\}$ occur after the resample, then $0$ is a trifucation. Therefore,
	$$
	P \big( A_0 \big) \geq P \Big( D_{x,y,z,n} \Big) P\Big( F_2 = \mathcal{F}_1 \cup \mathcal{F}_2 \setminus B(n) \Big) P \Big( M_{B(n)} \geq 3, E_n , F_1 = \mathcal{F}_1, F_2 = \mathcal{F}_2 \Big) >0.  
	$$
	Now we can adapt the proof of Theorem $12.2$ in \cite{haggstrom2006uniqueness} (or the proof of Burton and Keane \cite{Burton1989Density} if one considers a site percolation on $\mathcal{FI}^{u,R^3}$). For each trifurcation $t \in B(n)$, there is a one-to-one corresponding point $y_t \in \partial^{in} B(n)$. However, the number of trifurcation points grow in $B(n)$ as $n^d$, but $\partial^{in} B(n)$ grows as $n^{d-1}$. We have a contradiction.   
\end{proof}

\section{Subcritical Phase}
\label{sec subcritical}

In this section we present the proof of Theorem \ref{main2}.

\begin{proof}[Proof of Theorem \ref{main2}] 
We use the Peierls argument \cite{R2008On}. Fix $u >0$. Let $\mathcal{C}$ be the connected component that contains the origin in the FRI, $\mathcal{FI}^{u,T}$. It suffices to show that there is a constant $T_0 (u) >0$ such that for all $0< T < T_0$,
$$
P \big( |\mathcal{C}| = \infty \big) = 0.
$$
We say a path is self-avoiding if it does not visit the same edge twice. Note that the number of self-avoiding paths in $\zd$ which have length $n$ and start at the origin is bounded above by $(2d)^n$. Let $N(n)$ be the number of such paths which are open. If the origin belongs to an infinite open cluster, then there are open self-avoiding paths starting at the origin of all lengths. So for all $n >0$,
$$
P \big( |\mathcal{C}| = \infty \big) \leq P \big( N(n) \geq 1 \big) \leq E \big[ N(n) \big].
$$  
Let $\gamma$ be a self-avoiding path that has length $n$ and starts at the origin. We want to estimate the probability that $\gamma$ is open. Let $N_{\gamma}$ be the number of killed random walks that traverse $\gamma$. Recall that $N_{\gamma}$ is a Poisson random variable with parameter $u \cdot \text{cap}^{(T)} (\gamma)$. Since the path $\gamma$ has length $n$, it has $n+1$ vertices. Note that the killed equilibrium measure is always less than or equal to $2d$, so
$$
\text{cap}^{(T)} (\gamma) \leq 2d(n+1),
$$
for all $T>0$. By exponential Markov inequality,
\begin{equation}
\begin{aligned}
& P \Big( N_{\gamma} > eu(2d)(n+1)+ (n+1) \log (3d) \Big) \\
& \leq \frac{E \big[e^{N_{\gamma}}\big]}{\exp \big( eu(2d)(n+1)+ (n+1) \log (3d) \big)} \\
& = \frac{\exp \big(u (e-1) \cdot \text{cap}^{(T)} (\gamma) \big)}{\exp \big( eu(2d)(n+1)+ (n+1) \log (3d) \big)} \\
& \leq \exp \big( eu(2d)(n+1) - eu(2d)(n+1) - (n+1) \log (3d) \big) \\
& = (3d)^{-n-1}.
\end{aligned}
\end{equation}
If the path $\gamma$ is open in $\mathcal{FI}^{u,T}$, then the $N_{\gamma}$ killed random walks that traverse $\gamma$ must travel more than $n$ steps in total after they first enter $\gamma$. Assume $0< T < 1$. Note that the survival rate for killed random walks at each step is $T/(T+1)$, which is smaller than $T$. Let $Y_1, Y_2, \cdots$ be i.i.d. geometric random variables with parameter $1-T$. Let
$$
L:= \lceil eu(2d)(n+1)+ (n+1) \log (3d) \rceil. 
$$
Then,
$$
P \big( \gamma \text{ is open} \big| N_{\gamma} \leq L \big) \leq P \Bigg( \sum_{i=1}^{L} Y_i \geq L + n \Bigg).
$$ 
By Chernoff bound,
$$
P \Bigg( \sum_{i=1}^{L} Y_i \geq L + n \Bigg) \leq e^{-t(L+n)} \Bigg( \frac{(1-T)e^t}{1 - T e^t}\Bigg)^{L} = e^{-tn} \Bigg( \frac{1-T}{1 - T e^t} \Bigg)^{L},
$$
for all $t>0$ such that $Te^t<1$. Take $t_0 = \log (6d)$. We choose $0< T_0(u) <1 $ such that
$$
T_0 e^{t_0} = 6d T_0 < 1,
$$
and
$$
\Bigg( \frac{1-T_0}{1 - T_0 e^{t_0} }\Bigg)^{ \lceil eu(2d)+  \log (3d) \rceil } \leq 2.
$$
Then for all $0 < T < T_0$,
$$
P \big( \gamma \text{ is open} \big| N_{\gamma} \leq L \big) \leq e^{-t_0 n} \Bigg( \frac{1-T}{1 - T e^{t_0}} \Bigg)^{L} \leq (6d)^{-n} 2^{n+1} = 2(3d)^{-n}. 
$$
So,
$$
P \big( \gamma \text{ is open} \big)  \leq P \big( \gamma \text{ is open} \big| N_{\gamma} \leq L \big) + P \big( N_{\gamma} > L \big)  \leq 2(3d)^{-n} + (3d)^{-n-1}.
$$
Since $\gamma$ is arbitrary,
$$
P \big( |\mathcal{C}| = \infty \big) \leq E \big[ N(n) \big] \leq (2d)^{n} \Big( 2(3d)^{-n} + (3d)^{-n-1} \Big) \xrightarrow{n \rightarrow \infty} 0.
$$
The proof is complete.
\end{proof}

\section*{Acknowledgments}
We would like to thank Dr. Bal\'{a}zs R\'{a}th for reminding us relevant studies that we had first overlooked. Part of this paper was written while the first two authors were visitors of Peking University. We would like to thank an anonymous referee for very detailed and helpful comments.

\bibliographystyle{plain}
\bibliography{cite}

\begin{thebibliography}{10}

\bibitem{MR3770879}
M.~Biskup and E.~B. Procaccia.
\newblock Eigenvalue versus perimeter in a shape theorem for self-interacting
  random walks.
\newblock {\em Ann. Appl. Probab.}, 28(1):340--377, 2018.

\bibitem{bowen2017finitary}
L.~Bowen.
\newblock Finitary random interlacements and the gaboriau-lyons problem.
\newblock {\em arXiv preprint arXiv:1707.09573}, 2017.

\bibitem{Burton1989Density}
R.~M. Burton and M.~Keane.
\newblock Density and uniqueness in percolation.
\newblock {\em Communications in Mathematical Physics}, 121(3):501--505, 1989.

\bibitem{vcerny2012internal}
J.~{\v{C}}ern{\`y} and S.~Popov.
\newblock On the internal distance in the interlacement set.
\newblock {\em Electronic Journal of Probability}, 17, 2012.

\bibitem{drewitz2014introduction}
A.~Drewitz, B.~R{\'a}th, and A.~Sapozhnikov.
\newblock {\em An introduction to random interlacements}.
\newblock Springer, 2014.

\bibitem{drewitz2014chemical}
A.~Drewitz, B.~R{\'a}th, and A.~Sapozhnikov.
\newblock On chemical distances and shape theorems in percolation models with
  long-range correlations.
\newblock {\em Journal of Mathematical Physics}, 55(8):083307, 2014.

\bibitem{MR682796}
R.~Durrett and D.~Griffeath.
\newblock Supercritical contact processes on {${\bf Z}$}.
\newblock {\em Ann. Probab.}, 11(1):1--15, 1983.

\bibitem{MR3687239}
D.~Erhard, J.~Mart\'{\i}nez, and J.~Poisat.
\newblock Brownian paths homogeneously distributed in space: percolation phase
  transition and uniqueness of the unbounded cluster.
\newblock {\em J. Theoret. Probab.}, 30(3):784--812, 2017.

\bibitem{MR3519252}
D.~Erhard and J.~Poisat.
\newblock Asymptotics of the critical time in {W}iener sausage percolation with
  a small radius.
\newblock {\em ALEA Lat. Am. J. Probab. Math. Stat.}, 13(1):417--445, 2016.

\bibitem{gaboriau2009measurable}
D.~Gaboriau and R.~Lyons.
\newblock A measurable-group-theoretic solution to von neumann’s problem.
\newblock {\em Inventiones mathematicae}, 177(3):533--540, 2009.

\bibitem{garet2017continuity}
O.~Garet, R.~Marchand, E.~B. Procaccia, and M.~Th{\'e}ret.
\newblock Continuity of the time and isoperimetric constants in supercritical
  percolation.
\newblock {\em Electronic Journal of Probability}, 22, 2017.

\bibitem{MR1707339}
G.~Grimmett.
\newblock {\em Percolation}, volume 321 of {\em Grundlehren der Mathematischen
  Wissenschaften [Fundamental Principles of Mathematical Sciences]}.
\newblock Springer-Verlag, Berlin, second edition, 1999.

\bibitem{haggstrom2006uniqueness}
O.~H{\"a}ggstr{\"o}m and J.~Jonasson.
\newblock Uniqueness and non-uniqueness in percolation theory.
\newblock {\em Probability Surveys}, 3:289--344, 2006.

\bibitem{lawler2010random}
G.~F. Lawler and V.~Limic.
\newblock {\em Random walk: a modern introduction}, volume 123.
\newblock Cambridge University Press, 2010.

\bibitem{Liggett1997Domination}
T.~M. Liggett, R.~H. Schonmann, and A.~M. Stacey.
\newblock Domination by product measures.
\newblock {\em Annals of Probability}, 25(1):71--95, 1997.

\bibitem{R2008On}
R.~E. Peierls.
\newblock On ising's model of ferromagnetism.
\newblock {\em Mathematical Proceedings of the Cambridge Philosophical
  Society}, 32(3):477--481, 1936.

\bibitem{procaccia2016quenched}
E.~B. Procaccia, R.~Rosenthal, and A.~Sapozhnikov.
\newblock Quenched invariance principle for simple random walk on clusters in
  correlated percolation models.
\newblock {\em Probability theory and related fields}, 166(3-4):619--657, 2016.

\bibitem{procaccia2014range}
E.~B. Procaccia and E.~Shellef.
\newblock On the range of a random walk in a torus and random interlacements.
\newblock {\em The Annals of Probability}, 42(4):1590--1634, 2014.

\bibitem{Procaccia2011Geometry}
E.~B. Procaccia and J.~Tykesson.
\newblock Geometry of the random interlacement.
\newblock {\em Electronic Communications in Probability}, 16(22):528--544,
  2011.

\bibitem{R2010Connectivity}
B.~R{\'a}th and A.~Sapozhnikov.
\newblock Connectivity properties of random interlacement and intersection of
  random walks.
\newblock {\em Latin American Journal of Probability \& Mathematical
  Statistics}, 9(1):67--83, 2010.

\bibitem{rath2011transience}
B.~R{\'a}th and A.~Sapozhnikov.
\newblock On the transience of random interlacements.
\newblock {\em Electronic Communications in Probability}, 16:379--391, 2011.

\bibitem{Sznitman2009Vacant}
A.~S. Sznitman.
\newblock Vacant set of random interlacements and percolation.
\newblock {\em Annals of Mathematics}, 171(3):págs. 2039--2087, 2009.

\bibitem{sznitman2012topics}
A.~S. Sznitman.
\newblock {\em Topics in occupation times and Gaussian free fields}, volume~16.
\newblock European Mathematical Society, 2012.

\bibitem{MR3167123}
A.~S. Sznitman.
\newblock On scaling limits and {B}rownian interlacements.
\newblock {\em Bull. Braz. Math. Soc. (N.S.)}, 44(4):555--592, 2013.

\bibitem{teixeira2009interlacement}
A.~Teixeira.
\newblock Interlacement percolation on transient weighted graphs.
\newblock {\em Electronic Journal of Probability}, 14:1604--1627, 2009.

\end{thebibliography}

\end{document}